\numberwithin{equation}{section}
\newtheorem{theorem}{Theorem}[section]
\newtheorem{lemma}[theorem]{Lemma}
\newtheorem{proposition}[theorem]{Proposition}
\newtheorem{corollary}[theorem]{Corollary}
\theoremstyle{definition}
\newtheorem{def-prop}[theorem]{Definition-Proposition}
\newtheorem{remark}[theorem]{Remark}
\newtheorem{example}[theorem]{Example}
\DeclareMathOperator{\Proj}{Proj}
\DeclareMathOperator{\supp}{supp}
\DeclareMathOperator{\reg}{reg}
\DeclareMathOperator{\depth}{depth}
\DeclareMathOperator{\To}{\longrightarrow}
\DeclareMathOperator{\Ass}{Ass}
\DeclareMathOperator{\height}{ht}
\DeclareMathOperator{\lcm}{lcm}
\DeclareMathOperator{\pd}{pd}
\newcommand{\PP}{{\mathbb P}}
\newcommand{\ZZ}{{\mathbb Z}}
\newcommand{\NN}{{\mathbb N}}
\newcommand{\QQ}{{\mathbb Q}}
\def\mm{{\frak m}}
\def\pp{{\frak p}}
\def\G{{\mathcal G}}
\def\ini{\operatorname{in}}
\def\H{{\mathcal H}}
\def\D{{\Delta}}
\def\a{{\bf a}}
\def\b{{\bf b}}
\def\c{{\bf c}}
\def\1{{\bf 1}}
\def\0{{\bf 0}}
\begin{document}


\title{Depth and regularity modulo a principal ideal}

\author[Caviglia]{Giulio Caviglia}
\address{Department of Mathematics, Purdue University, 150 N. University
Street, West Lafayette, IN 47907-2067,  USA}
\email{gcavigli@math.purdue.edu}

\author[H\`a]{Huy T\`ai H\`a}
\address{Department of Mathematics \\ Tulane University \\
6823 St. Charles Ave. \\ New Orleans, LA 70118, USA}
\email{tha@tulane.edu}

\author[Herzog]{J\"urgen Herzog}
\address{Fachbereich 6, Mathematik \\ Universit\"at Duisburg-Essen \\
Campus Essen \\
45117 Essen, Germany}
\email{juergen.herzog@gmail.com}

\author[Kummini]{Manoj Kummini}
\address{Chennai Mathematical Institute \\  Siruseri, Tamilnadu 603103, India}
\email{mkummini@cmi.ac.in}

\author[Terai]{Naoki Terai}
\address{Faculty of Culture and Education, Saga University, Saga 840–8502, Japan}
\email{terai@cc.saga-u.ac.jp}

\author[Trung]{Ngo Viet Trung}
\address{Institute of Mathematics \\ Vietnam Academy of Science and Technology \\ 18 Hoang Quoc Viet, Hanoi, Vietnam}
\email{nvtrung@math.ac.vn}
\urladdr{}

\keywords{depth, regularity,  monomial ideal, powers of an ideal, edge ideal, very well-covered graph, chordal graph}
\subjclass[2010]{Primary: 13C20; Secondary: 13D45, 14B05, 05C65.}

\begin{abstract}
We study the relationship between depth and regularity of a homogeneous ideal $I$ and those of $(I,f)$ and $I:f$, where $f$ is a linear form or a monomial. Our results have several interesting consequences on depth and regularity of edge ideals of hypergraphs and of powers of ideals. 
\end{abstract}

\maketitle


\section{Introduction}

Let $R = k[x_1, \dots, x_n]$ be a polynomial ring over a field $k$ and let $X = \Proj R/I$ be a projective scheme in $\PP^{n-1}$,
where $I$ is a homogeneous ideal. The main goal of our work is to understand the connection between algebraic invariants and properties of $X$ and its hypersurface sections. This is a classical topic that has inspired many important results in both algebraic geometry and commutative algebra.
We shall focus on invariants that govern the computational complexity, namely the depth and the Castelnuovo-Mumford regularity (or simply, regularity). More specifically, we shall examine the relationship between the depth and the regularity of $I$ and $(I,f)$, and their powers, when $f \in R$ is a linear form or when $I$ is a monomial ideal and $f$ is a monomial.

Our work is inspired by a recent paper of Dao, Huneke and Schweig \cite{DHS},
in which it was proved that if $I \subseteq R$ is a monomial ideal and $x$ is a variable appearing in the generators of $I$ then
$$\reg R/I \in \{ \reg R/(I:x) + 1, \reg R/(I,x) \},$$
and if, in addition, $I$ is a squarefree monomial ideal then
$$\depth R/I \in \{\depth R/(I:x), \depth R/(I,x)\}.$$
These results give {\em recursive formulas} for regularity and depth of
squarefree monomial ideals and played a key role in their work on edge ideals of graphs.

Our first result extends these formulas of Dao, Huneke and Schweig to a large class of homogeneous ideals.
\medskip

\noindent{\bf Theorem \ref{rmk.GLF}.}
Let $I = J + fH,$
where $J$ and $H$ are homogeneous ideals in $R$ and $f$ is a linear form that is a non-zerodivisor of $R/J$. Then
\begin{enumerate}[{\indent (i)}]
\item $\depth R/I \in \big\{ \depth R/(I:f), \depth R/(I,f)\big\}.$
\item $\reg R/I \in \big\{\reg R/(I:f)+1, \reg R/(I,f)\big\}.$
\end{enumerate}
\smallskip

Note that $I:f = J+H$ and $(I,f) = (J,f)$ under the assumptions of Theorem \ref{rmk.GLF}.
We also obtain similar formulas for the case where $f$ is any form of higher degree that is a non-zerodivisor of $R/J$.
\par

Obviously, we can choose $I$ to be any monomial ideal and $f$ to be any variable.
This particularly shows that the depth formula of Dao, Huneke and Schweig also holds for arbitrary monomial ideals. We can even extend their depth formula to the case $f$ being an arbitrary monomial as follows.  \medskip

\noindent {\bf Theorem \ref{thm.inclusion}.}
Let $I$ be a monomial ideal and let $f$ be an arbitrary monomial in $R$. Then
\begin{enumerate}[{\indent (i)}]
\item $\depth R/I \in \{\depth R/(I:f),  \depth R/(I,f)\},$
\item  $\depth R/I = \depth R/(I:f)$ if $\depth R/(I,f) \ge \depth R/(I:f)$.
\end{enumerate}
\medskip

We will give examples showing that Theorem \ref{thm.inclusion} is no longer true if $I$ is not a monomial ideal or if $f$ is not a monomial, and that there is no similar formula for $\reg R/I$ if $f$ is not a variable.
\par

We can also make the regularity formula of Dao, Huneke and Schweig more precise as follows.
\medskip

\noindent{\bf Theorem \ref{precise}.}
Let $I$ be a monomial ideal and $x$ a variable of $R$. Then
\begin{enumerate}[{\indent (i)}]
\item $\reg R/I =  \reg R/(I:x)+1$ if $\reg R/(I:x)  > R/(I,x)$,
\item  $\reg R/I \in \{\reg R/(I,x)+1, \reg R/(I,x)\}$ if $\reg R/(I:x) = \reg R/(I,x)$,
\item $\reg R/I = \reg R/(I,x)$ if $\reg R/(I:x) < \reg R/(I,x)$.
\end{enumerate}
\medskip

From Theorem \ref{precise} it follows that $\reg I$ is either
$\max\{\reg (I:x), \reg (I,x)\}$ or $\max\{\reg (I:x), \reg (I,x)\} +1$.
In particular, if $\reg (I:x) \le r$  for all variables $x$ of $R$, then $\reg I \le r+1$.
This implication is very useful because one can use induction to estimate the regularity of a monomial ideal. \par

Our approach can be used to study the behavior of the functions $\depth R/I^t$ and $\reg R/I^t$ for $t \ge 1$. 
If $I$ is the edge ideal of a hypergraph which contains a good leaf and $f$ is the monomial of the associated leaf, we show that $I^{t+1}:f = I^t$ and $\overline{I^{t+1}}:f = \overline{I^t}$ for all $t \ge 1$, where $\overline{I^t}$ denotes the integral closure of $I^t$. Using these relations we derive the following result. 
\medskip

\noindent {\bf Theorem \ref{thm.leaf}.} 
Let $I$ be the edge ideal of a hypergraph which contains a good leaf. Then
\begin{enumerate}[{\indent \rm (i)}]
\item the functions $\depth R/I^t$ and $\depth R/\overline{I^t}$ are non-increasing for all $t \ge 1$,
\item the functions $\reg R/I^t$ and $\reg R/\overline{I^t}$ are non-decreasing for all $t \ge 1$.
\end{enumerate}
\smallskip

Notice that the function $\depth R/I^t$ of the edge ideal of a hypergraph needs not be non-increasing (see \cite{HS, KSS}), and that it is still an open question (see \cite{BHH}) whether the depth function of the edge ideal of a graph is non-increasing.
\medskip

In general, the functions $\depth R/I^t$ and $\reg R/I^t$ need not be monotone; see \cite{BHH, Be, HNTT, HTT}. 
However, if we can find a common non-zerodivisor $f$ on $I^t$ for all $t \ge 1$, then $\depth R/(I,f)^t$ and $\reg R/(I,f)^t$ are monotone functions regardless of the behavior of $\depth R/I^t$ and $\reg R/I^t$.  This follows from the following result, whose proof is based on the same ideas as that of Theorem \ref{rmk.GLF}.
\medskip

\noindent {\bf Theorem \ref{thm.RE}.} 
Let $R$ be a positively graded algebra over a field, and let $I$ be a graded ideal in $R$.
Let $f \neq 0$ be a form in $R$ which is a non-zerodivisor of $R/I^t$ for all $t \le s$. Then
\begin{enumerate}[{\indent \rm (i)}]
\item $\depth R/(I,f)^s  = \min_{t \le s}   \depth R/I^t -1$,
\item $\reg R/(I,f)^s - s\deg f = \max_{t \le s} \big\{ \reg R/I^t - t\deg f \big\}$.
\end{enumerate}
\smallskip

As a consequence, if $f = x$ is a new variable then $\depth R[x]/(I,x)^s$ is a non-increasing function and $\reg R[x]/(I,x)^s$ is a non-decreasing function, while the functions $\depth R/I^s$ and $\reg R/I^s$ need not be so. \par

The relationship between depth and regularity of the ideals $I, I:x, (I,x)$ is very useful in finding combinatorial
characterizations of these invariants for edge ideals of hypergraphs. We will demonstrate this approach in two applications.

Let $I(\H)$ denote the edge ideal of a hypergraph $\H$ in a polynomial ring $R$. We call $\H$ a Cohen-Macaulay hypergraph if $R/I(\H)$ is a Cohen-Macaulay ring, i.e., $\depth R/I(\H) = \dim R/I(\H)$. Moreover, for simplicity, we set $\reg \H = \reg I(\H)$ and call it the regularity of $\H$. \par

The first application is a short proof for the following characterization of  Cohen-Macaulay very well-covered graphs, which is essentially due to Crupi, Rinaldo, and Terai \cite{CRT} (see also Constantinescu and Varbaro \cite{CV} and Mahmoudi et al \cite{Ma} for other variants of this characterization). This characterization is a generalization of a criterion for Cohen-Macaulay bipartite graphs of Herzog and Hibi \cite{HH2}.
\medskip

\noindent{\bf Theorem \ref{CM}.}
Let $\H$ be a simple graph on $2n$ vertices which has a minimal vertex cover of $n$ elements.
Then $\H$ is a Cohen-Macaulay graph if and only if $\H$ is a twin-free very well-covered graph.
\medskip

The second application is a sufficient condition for a hypergraph to have regularity $\le 3$.
Since the maximal generating degree of a homogeneous ideal is bounded above by its regularity,
the edge ideal of a hypergraph of regularity 2 is that of a graph. 
Fr\"oberg \cite{Fr} characterized graphs of regularity $\le 2$ to be those whose complements are chordal. \par

For every vertex $x$ of a hypergraph $\H$, we denote by $\H:x$ the 
hypergraph of all minimal sets of the form $F \cap (V\setminus \{x\})$, where $F$ is an edge of $\H$. 
\medskip

\noindent{\bf Theorem \ref{reg3}.}
Let $\H$ be a hypergraph such that $\H:x$ is a graph whose complement is chordal for all vertices $x$ of $\H$.
Then $\reg \H \le 3$.
\medskip

A special case of Theorem \ref{reg3} is a result of Nevo on the regularity of gap-free and claw-free graphs \cite{Ne}, which was also reproved by Dao, Huneke and Schweig in \cite{DHS}.

In publishing this paper, we hope that our results and methods
would stimulate further investigations on the behavior of depth and regularity between the ideals
$I$, $I:f$ and $(I,f)$.


\section{Preliminaries} \label{sec.prel}

In this section, we recall notations and terminology used in the paper, and a number of auxiliary results.
Generally, we will follow standard texts in this research area (cf. \cite{BH, E, HH1}).

Throughout the paper, the two important invariants that we investigate are the depth and the regularity.  Though these notions can be defined in several ways, we prefer their definition by means of local cohomology modules.

Let $R$ be a positively graded algebra and let $\mm$ be its maximal homogeneous ideal.
Let $M$ be a finitely generated graded $R$-module. Let $H_\mm^i(M)$, for $i \ge 0$, denote the $i$-th local cohomology module of $M$ with respect to $\mm$. We define
\begin{align*}
\depth M & := \min\{ i ~\big|~ H^i_\mm(M) \not= 0\},\\
\reg M & := \max\{i+j ~\big|~ H^i_\mm(M)_j \not= 0\}.
\end{align*}

\begin{remark} \label{a-invariant}
Let $a_i(M) := \max \{ j|\ H^i_\mm(M)_j \not= 0\}$
with the convention that $a_i(M) := -\infty$ if $H^i_\mm(M) = 0.$
Then
$$\depth M = \min\{i|\ a_i(M) \neq -\infty\},$$
$$\reg M = \max \{a_i(M) + i ~\big|~ i \ge 0\}.$$
\end{remark}

When $R$ is a polynomial ring and $M$ is a finitely generated graded $R$-module, the depth and the regularity of $M$ are closely related to the minimal free resolution and graded Betti numbers of $M$ in the following way. Suppose that $M$ admits the following minimal free resolution:
$$0\rightarrow \bigoplus_{j \in \ZZ}R(-j)^{\beta_{p,j}(M)} \rightarrow \dots \rightarrow \bigoplus_{j \in \ZZ} R(-j)^{\beta_{0,j}(M)} \rightarrow M \rightarrow 0.$$
Let $\pd M$ denote the \emph{projective dimension} of $M$. Then
\begin{align*}
\pd M & = \max \{i ~\big|~ \beta_{i,j}(M) \not= 0 \text{ for some } j \in \ZZ\},\\
\depth M & = n - \pd M, \\
\reg M & = \max \{ j-i ~\big|~ \beta_{i,j}(M) \not= 0\}.
\end{align*}
Note that the maximal degree of the minimal generators of $M$ is bounded by $\reg M$. \par

The definition of depth and regularity by means of the local cohomology modules is best suited when working with short exact sequences. In fact, using the long derived sequence of local cohomology modules we immediately obtain the following facts, which we shall need later in the next sections.

\begin{lemma} \label{exact}
Let $0 \to L \to M \to N \to 0$ be a short exact sequence of finite graded $R$-modules. Then
\begin{enumerate}[{\indent \em (i)}]
\item $\depth M \ge \min\{\depth L, \depth N\}$,
\item $\depth M = \depth N$ if $\depth L > \depth M$,
\item $\reg L = \reg N+1$ if $\reg L > \reg M$.
\end{enumerate}
\end{lemma}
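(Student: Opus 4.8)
The plan is to derive all three parts from the long exact sequence in local cohomology associated to the short exact sequence $0 \to L \to M \to N \to 0$, namely
\[
\cdots \to H^{i}_\mm(L) \to H^{i}_\mm(M) \to H^{i}_\mm(N) \to H^{i+1}_\mm(L) \to \cdots,
\]
which is graded, so I can read off each part in a fixed internal degree $j$ and then take the appropriate extremes. For part (i), suppose $i < \min\{\depth L, \depth N\}$; then $H^i_\mm(L) = 0$ and $H^i_\mm(N) = 0$, and exactness forces $H^i_\mm(M) = 0$ as it sits between two vanishing modules. Hence the smallest $i$ with $H^i_\mm(M) \neq 0$ is at least $\min\{\depth L, \depth N\}$, which is exactly the claim. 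This is the routine warm-up and presents no real difficulty.

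For part (ii), I would assume $\depth L > \depth M$ and let $d = \depth M$, so $H^d_\mm(M) \neq 0$ while $H^d_\mm(L) = 0$ (since $d < \depth L$). The segment $H^d_\mm(L) \to H^d_\mm(M) \to H^d_\mm(N)$ then shows that the map $H^d_\mm(M) \to H^d_\mm(N)$ is injective, so $H^d_\mm(N) \neq 0$ and thus $\depth N \le d = \depth M$. Combined with part (i), which gives $\depth M \ge \min\{\depth L, \depth N\} = \depth N$ (the minimum being $\depth N$ because $\depth L > \depth M \ge \depth N$), I obtain $\depth M = \depth N$.

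For part (iii), assume $\reg L > \reg M$ and work with the invariants $a_i$ from Remark \ref{a-invariant}, aiming to show $\reg L = \reg N + 1$. The inequality $\reg L \le \reg N + 1$ follows by extracting, from the connecting map, that any nonvanishing $H^i_\mm(L)_j$ with large $i+j$ either survives into $H^i_\mm(M)_j$ (impossible if $i + j = \reg L > \reg M$) or maps injectively from the cokernel of $H^{i-1}_\mm(N) \to H^{i-1}_\mm(L)$, forcing $H^{i-1}_\mm(N)_j \neq 0$ with $(i-1) + j = \reg L - 1$, so $\reg N \ge \reg L - 1$. Conversely one shows $\reg L \ge \reg N + 1$ by a symmetric reading of the connecting homomorphism $H^{i}_\mm(N) \to H^{i+1}_\mm(L)$ and using $\reg M < \reg L$ to kill the contribution of $M$.

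The main obstacle is part (iii): unlike depth, which only tracks vanishing, regularity records the precise top degrees $a_i(M) + i$, so I must carefully follow a specific graded piece $H^i_\mm(-)_j$ with $i + j$ equal to $\reg L$ through two adjacent connecting maps and exploit the strict inequality $\reg L > \reg M$ to eliminate every term involving $M$. The delicate point is ensuring that the extremal degree realizing $\reg L$ cannot be absorbed by $M$ and hence must shift to $N$ with a drop of exactly one in homological degree, which is precisely what produces the "$+1$".
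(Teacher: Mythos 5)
Your proposal is correct and follows exactly the route the paper intends: the paper states Lemma \ref{exact} without proof, remarking only that it is immediate from the long exact sequence of local cohomology, which is precisely the graded long exact sequence you use, and parts (i) and (ii) are carried out flawlessly. The one spot in (iii) needing slightly more care is the direction $\reg L \ge \reg N + 1$, where at the extremal degree of $N$ the term $H^i_\mm(M)_j$ is not killed by $\reg L > \reg M$ alone; a short contradiction argument (assume $\reg N \ge \reg L$, so that the extremal degree of $N$ exceeds $\reg M$, and read off $\reg L \ge \reg N + 1$ from the connecting map) closes this cleanly, so the proof is sound.
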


From now on, unless otherwise stated, we let $R = k[x_1, \dots, x_n]$ be a polynomial ring over a field $k$ and let $I$ be a nonzero homogeneous ideal in $R$. From the above characterizations of depth and regularity by means of the graded Betti numbers we immediately have the following relationships between $R/I$ and $I$: \begin{align*}
\depth R/I & = \depth I -1,\\
\reg R/I & = \reg I - 1.
\end{align*}
These formulas will be used without reference in this paper.

Recall that for a finitely generated graded $R$-module $M$, an element $f \in R$ is called a \emph{filter-regular element} of $M$ if $f \not\in \pp$ for all associated primes $\pp \neq \mm$ of $M$.
A sequence $f_1, \dots, f_s \in R$ of elements  is called a \emph{filter-regular sequence} of $M$ if $f_i$ is a filter-regular element of $M/(f_1, \dots, f_{i-1})M$ for all $i = 1, \dots, s$. It is easy to see that this is equivalent to the condition that
$$[(f_1,...,f_{i-1})M:f_i]_t = [(f_1,...,f_{i-1})M]_t$$
for all $i = 1,...,s$, and $t \gg 0$ (see \cite[Lemma 2.1]{Tr1}).

\begin{remark} \label{full}
If $k$ is an infinite field, we can use prime avoidance to find a linear form which is filter-regular with respect to $M$. Applying this fact successively, we may assume that $x_n,...,x_1$ is a filter-regular sequence of $M$ after a linear change of the variables.
\end{remark}

Let $\ini(I)$ denote the initial ideal of $I$ with respect to the degree reverse lexicographic order. When the $x_n,...,x_1$ is a filter-regular sequence of $R/I$, one can easily pass the investigation from $I$ to $\ini(I)$.

\begin{theorem} \label{ini}
Assume that $x_n,...,x_1$ is a filter-regular sequence of $R/I$. Then
\begin{enumerate}[{\indent \em (i)}]
\item $\depth R/I = \depth R/\ini(I),$
\item $\reg R/I = \reg R/\ini(I).$
\end{enumerate}
\end{theorem}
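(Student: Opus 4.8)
The plan is to prove both statements at once by reading each invariant off the filter-regular sequence $x_n,\dots,x_1$ and then transporting the relevant numerical data to $\ini(I)$ via two standard features of the reverse lexicographic order. First recall that Macaulay's theorem gives $\beta_{ij}(R/I)\le\beta_{ij}(R/\ini(I))$ for all $i,j$, whence $\depth R/I\ge\depth R/\ini(I)$ and $\reg R/I\le\reg R/\ini(I)$; so only the reverse inequalities use the hypothesis. The two order-theoretic facts I would invoke, both valid precisely because $x_n$ is the smallest variable, are the colon identity $\ini(I):x_n=\ini(I:x_n)$ and the reduction identity $(\ini(I),x_n)=\ini((I,x_n))$.

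For a finite-length graded module $N$ write $s(N)=\max\{j : N_j\neq 0\}$, with $s(0)=-\infty$. Set $I_i=(I,x_n,\dots,x_{n-i+1})$, $M_i=R/I_i$ (so $M_0=R/I$), and $N_i=(I_i:x_{n-i})/I_i=0:_{M_i}x_{n-i}$. Since $x_n,\dots,x_1$ is filter-regular on $R/I$, each $N_i$ has finite length, and a step-by-step analysis of the exact sequence
$$0\to (M_i/N_i)(-1)\xrightarrow{\ x_{n-i}\ } M_i\to M_{i+1}\to 0,$$
splitting off the finite-length kernel $N_i(-1)$ and reading the long exact sequence of local cohomology, yields the filter-regular formulas
$$\depth R/I=\min\{\,i : N_i\neq 0\,\},\qquad \reg R/I=\max_i\, s(N_i).$$
These express that regularity is unchanged modulo a regular linear form while a filter-regular but zero-dividing step contributes only the top degree of its finite-length annihilator. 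I would either cite \cite{Tr1} for them or prove the single-step identity $\reg M=\max\{\reg M/\ell M,\ s(0:_M\ell)\}$ for a filter-regular linear form $\ell$ and iterate, and likewise the elementary fact that $\depth M>0$ iff $0:_M\ell=0$.

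I would then transport this data to $\ini(I)$ by passing successively to the subrings $k[x_1,\dots,x_{n-i}]=R/(x_n,\dots,x_{n-i+1})$, in each of which $x_{n-i}$ is the smallest variable. Iterating the two order identities gives $\ini(I_i)=(\ini(I),x_n,\dots,x_{n-i+1})$ and $\ini(I_i):x_{n-i}=\ini(I_i:x_{n-i})$; together with Macaulay's theorem this shows the defect module $N_i^{\ini}:=(\ini(I)_i:x_{n-i})/\ini(I)_i$ has the same Hilbert function as $N_i$. In particular each $N_i^{\ini}$ is again finite length, so $x_n,\dots,x_1$ is filter-regular on $R/\ini(I)$ and the two formulas apply to it; moreover $N_i^{\ini}=0\iff N_i=0$ and $s(N_i^{\ini})=s(N_i)$ for every $i$. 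Substituting these equalities into the two formulas yields $\depth R/\ini(I)=\depth R/I$ and $\reg R/\ini(I)=\reg R/I$.

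I expect the main obstacle to be the regularity half, namely the single-step identity $\reg M=\max\{\reg M/\ell M,\ s(0:_M\ell)\}$: this requires a careful reading of the local cohomology long exact sequences to handle the degree shift $(-1)$ and to confirm that the potential ``$+1$'' coming from the map $(M/N)(-1)\to M$ never actually raises the regularity, so that a crude subadditive bound does not overshoot. One must also be scrupulous that at each stage $x_{n-i}$ is genuinely the smallest variable, so that the two reverse-lexicographic identities legitimately apply. The depth half is comparatively routine, since it only records whether each successive annihilator $N_i$ is nonzero.
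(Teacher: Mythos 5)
Your proposal is correct, but it takes a different route from the paper: the paper's entire proof is a citation of Trung's general comparison theorem \cite[Theorem 1.2]{Tr2}, which asserts $\max\{a_i(R/I)+i \mid i\le t\}=\max\{a_i(R/\ini(I))+i\mid i\le t\}$ for all $t$, from which both the depth equality (least $t$ with a finite value) and the regularity equality ($t$ large) drop out via Remark \ref{a-invariant}. You instead reprove the needed special case from scratch: read off $\depth$ and $\reg$ from the colon modules $N_i=0:_{M_i}x_{n-i}$ along the filter-regular sequence, transport the ideals via the reverse-lexicographic identities $\ini(I):x_n=\ini(I:x_n)$ and $\ini(I,x_n)=\ini(I)+(x_n)$ (iterated by descending to $k[x_1,\dots,x_{n-i}]$, where $x_{n-i}$ is again the last variable), and then use Macaulay's theorem on Hilbert functions to conclude that $N_i$ and $N_i^{\ini}$ vanish simultaneously and have the same top degree. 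All the individual steps check out: the single-step identity $\reg M=\max\{\reg M/\ell M,\ s(0:_M\ell)\}$ for a filter-regular linear form $\ell$ is a true standard lemma (your worry about the shift is resolved by the observation that when $H^0_\mm(M)\neq 0$ the socle forces $s(0:_M\ell)=a_0(M)$, while for $i\ge 1$ the long exact sequence gives $a_i(M)+i\le a_{i-1}(M/\ell M)+(i-1)$), and your Hilbert-function argument also establishes that $x_n,\dots,x_1$ is filter-regular on $R/\ini(I)$, which you need before applying the formulas to $\ini(I)$. This is essentially the same circle of ideas as Trung's proof of the cited theorem, so what your version buys is self-containedness and a transparent explanation of where the filter-regularity hypothesis enters; what it costs is having to prove the single-step regularity identity that the paper outsources. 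Two small corrections: the inequality $\beta_{ij}(R/I)\le\beta_{ij}(R/\ini(I))$ is upper semicontinuity of Betti numbers under Gr\"obner degeneration, not Macaulay's theorem (and it is superfluous here, since your main argument yields equalities directly); and in the iteration $\reg M_0=\max\{s(N_0),\dots,s(N_{n-1}),\reg M_n\}$ one should note that $M_n=k$ contributes a $0$ which is absorbed because some $N_i\neq 0$ (hence $s(N_i)\ge 0$) whenever $I\neq 0$.
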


\begin{proof}
This follows from Remark \ref{a-invariant} and a general result \cite[Theorem 1.2]{Tr2}, which states that
$$\max\{a_i(R/I) +i|\ i \le t\} = \max\{a_i(R/\ini(I))+i|\ i \le t\}$$
for every $t \ge 0$.
\end{proof}

Our investigation on monomial ideals is based on a formula of Takayama.
Let $R = k[x_1,...,x_n]$ and $I$ a monomial ideal in $R$.
Then $R/I$ has a natural $\NN^n$-graded structure inherited from that of $R$.
Therefore, the local cohomology module $H_\mm^i(R/I)$ has a $\ZZ^n$-graded structure.
Takayama's formula \cite[Theorem 1]{Ta} describes the dimension of the $\ZZ^n$-graded component $H_\mm^i(R/I)_\a$, for $\a \in \ZZ^n$, in terms of a simplicial complexes $\D_\a(I)$.
We shall recall the construction of $\D_\a(I)$, as given in \cite{MT},
which is simpler than the original construction of \cite{Ta}.

For $\a = (a_1,...,a_n) \in \ZZ^n$, set $x^\a = x_1^{a_1} \cdots x_n^{a_n}$ and $G_\a := \{j \in [1,n] ~\big|~ a_j < 0\}$.
For every subset $F \subseteq [1,n]$, let $R_F = R[x_j^{-1}|\ j \in F]$. Define
\begin{equation*}
\D_\a(I)  = \{F \setminus G_\a|\ G_\a \subseteq F,\  x^\a \not\in IR_F\}.
\end{equation*}
We call $\D_\a(I)$ a \emph{degree complex} of $I$. \par

Takayama's formula is stated as follows:
$$\dim_kH_\mm^i(R/I)_\a =  \dim_k\widetilde H_{i-|G_\a|-1}(\D_\a(I),k).$$
The original formula in \cite[Theorem 1]{Ta} is slightly different. It contains additional conditions on $\a$ for  $H_\mm^i(R/I)_\a = 0$. However, the proof in \cite{Ta} shows that we may drop these conditions, which is more convenient for our investigation.

From Takayama's formula we immediately obtain the following characterizations of depth and regularity of monomial ideals in terms of the degree complexes.

\begin{proposition} \label{complex}
Let $I \subseteq R$ be a monomial ideal. Then
	\begin{enumerate}[\rm (i)]
		\item $\depth R/I =  \min\{|G_\a|+i ~\big|~  \a \in \ZZ^n, i \ge 0, \widetilde H_{i-1}(\D_\a(I),k) \neq 0\},$
		\item $\reg R/I =  \max\{|\a|+|G_\a|+i ~\big|~ \a \in \ZZ^n, i \ge 0, \widetilde H_{i-1}(\D_\a(I),k) \neq 0\}.$
	\end{enumerate}
\end{proposition}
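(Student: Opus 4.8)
The plan is to substitute Takayama's formula directly into the local-cohomology descriptions of depth and regularity recorded in Remark \ref{a-invariant}, after passing from the standard $\ZZ$-grading to the finer $\ZZ^n$-grading carried by $H^i_\mm(R/I)$. Since $R/I$ is $\ZZ^n$-graded and $\mm$ is generated by the (multihomogeneous) variables, each $H^i_\mm(R/I)$ is $\ZZ^n$-graded and refines the $\ZZ$-grading, so that $H^i_\mm(R/I)_j = \bigoplus_{|\a|=j} H^i_\mm(R/I)_\a$. First I would use this to rewrite the two invariants multigraded-wise: Remark \ref{a-invariant} gives $\depth R/I = \min\{i \mid H^i_\mm(R/I) \neq 0\}$, which equals $\min\{i \mid H^i_\mm(R/I)_\a \neq 0 \text{ for some } \a \in \ZZ^n\}$; and since $a_i(R/I) = \max\{|\a| \mid H^i_\mm(R/I)_\a \neq 0\}$, it gives $\reg R/I = \max\{|\a|+i \mid H^i_\mm(R/I)_\a \neq 0\}$.

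For part (i), Takayama's formula says that $H^i_\mm(R/I)_\a \neq 0$ if and only if $\widetilde H_{i-|G_\a|-1}(\D_\a(I),k) \neq 0$. Writing $i = |G_\a| + i'$, this nonvanishing condition becomes $\widetilde H_{i'-1}(\D_\a(I),k) \neq 0$, and the minimum above becomes
$$\depth R/I = \min\big\{ |G_\a| + i' ~\big|~ \a \in \ZZ^n,\ i' \ge 0,\ \widetilde H_{i'-1}(\D_\a(I),k) \neq 0 \big\},$$
which, after renaming $i'$ to $i$, is exactly the asserted formula. For part (ii), the same substitution $i = |G_\a| + i'$ turns the quantity $|\a|+i$ into $|\a| + |G_\a| + i'$ under the condition $\widetilde H_{i'-1}(\D_\a(I),k) \neq 0$, yielding
$$\reg R/I = \max\big\{ |\a| + |G_\a| + i' ~\big|~ \a \in \ZZ^n,\ i' \ge 0,\ \widetilde H_{i'-1}(\D_\a(I),k) \neq 0 \big\},$$
again the claimed formula after renaming.

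The only real point to verify — and the only genuine obstacle, which is a matter of bookkeeping rather than of substance — is that the index ranges match under the reindexing $i \mapsto i - |G_\a|$. I would check this as follows. Reduced simplicial homology $\widetilde H_{i'-1}(\D_\a(I),k)$ vanishes whenever $i'-1 < -1$, that is, whenever $i' < 0$; hence the nonvanishing condition already forces $i' \ge 0$, and since $|G_\a| \ge 0$ the original cohomological index $i = |G_\a| + i'$ is automatically $\ge 0$. Conversely, any contributing term has $i \ge 0$, and Takayama's formula forces $i - |G_\a| \ge 0$ there, so no summands are created or lost in passing between the two indexings. With this compatibility confirmed, both formulas follow immediately from the substitution, with no further computation required.
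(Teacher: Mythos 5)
Your proof is correct and follows exactly the route the paper intends: the paper states Proposition \ref{complex} as an immediate consequence of Takayama's formula combined with the local-cohomology descriptions in Remark \ref{a-invariant}, and your argument simply makes that substitution explicit, including the multigraded refinement $H^i_\mm(R/I)_j = \bigoplus_{|\a|=j} H^i_\mm(R/I)_\a$ and the reindexing $i = |G_\a| + i'$. The bookkeeping check that $i' \ge 0$ is forced by the vanishing of reduced homology below degree $-1$ is the right (and only) point needing verification, and you handle it correctly.
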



\section{Depth and regularity modulo a linear form}

The main aim of this section is to prove the following theorem, which extends the aforementioned results of Dao,
Huneke and Schweig on monomial ideals to a large class of homogeneous ideals. 
Unless otherwise specified, let $R = k[x_1, \dots, x_n]$ be a polynomial ring over a field and let $I$ be a homogeneous ideal in $R$.

\begin{theorem} \label{rmk.GLF}
Let $I = J + fH,$ where $J$ and $H$ are homogeneous ideals in $R$ and $f$ is a linear form that is a non-zerodivisor of $R/J$. Then
\begin{enumerate}[{\indent \em (i)}]
\item $\depth R/I \in \big\{ \depth R/(J+H), \depth R/J-1\big\}.$
\item $\reg R/I \in \big\{\reg R/(J+H)+1, \reg R/J\big\}.$
\end{enumerate}
\end{theorem}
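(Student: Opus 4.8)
The plan is to deduce both statements from a single multiplication-by-$f$ sequence and then to isolate the two genuine boundary cases, which are the only places where the non-zerodivisor hypothesis is really needed.

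\emph{Reduction.} First I would record that, because $f$ is a non-zerodivisor on $R/J$, one has $I:f = J+H$ and $(I,f)=(J,f)$, and that multiplication by $f$ produces the short exact sequence
$$0 \longrightarrow (R/(J+H))(-1) \xrightarrow{\ f\ } R/I \longrightarrow R/(J,f) \longrightarrow 0, \qquad (\ast)$$
whose kernel is $(R/(I:f))(-1)=(R/(J+H))(-1)$ and whose cokernel is $R/(I,f)=R/(J,f)$. Since $f$ is a non-zerodivisor on $R/J$ we have $\depth R/(J,f)=\depth R/J-1$ and $\reg R/(J,f)=\reg R/J$. Writing $L=(R/(J+H))(-1)$, $M=R/I$, $N=R/(J,f)$, and using that a shift leaves depth unchanged and raises regularity by $1$ (so $\depth L=\depth R/(J+H)$, $\reg L=\reg R/(J+H)+1$), assertions (i) and (ii) become exactly $\depth M\in\{\depth L,\depth N\}$ and $\reg M\in\{\reg L,\reg N\}$ for the sequence $(\ast)$.

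\emph{Generic cases.} Next I would run the long exact sequence of local cohomology of $(\ast)$ alongside Lemma \ref{exact}. For depth: if $\depth L\le\depth N$, inspecting $(\ast)$ in cohomological degree $\depth L$ forces $\depth M=\depth L$; and if $\depth L>\depth N$ while $\depth M>\depth N$, inspecting degrees $\depth N$ and $\depth N+1$ forces $\depth L=\depth N+1$. Hence the only case left open is $\depth L=\depth N+1$ together with $\depth M>\depth L$. For regularity, $\reg M\le\max\{\reg L,\reg N\}$ is automatic, and Lemma \ref{exact}(iii) together with a top-degree comparison (if a connecting map were nonzero in too high a degree it would inflate $\reg L$) disposes of every configuration except $\reg L=\reg N+1$ together with $\reg M\le\reg N-1$.

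\emph{Main obstacle.} The crux is therefore these two boundary cases, and this is precisely where the non-zerodivisor hypothesis must be used, since the depth and regularity inequalities for an arbitrary short exact sequence do permit the forbidden behaviour. The structural input is that $N=R/(J,f)$ is annihilated by $f$, so by naturality of the connecting maps the image of each $\delta\colon H^i_\mm(N)\to H^{i+1}_\mm(L)$ is annihilated by $f$. In the depth boundary case one shows $\delta$ is an isomorphism in the critical degree, whence $H^{\depth L}_\mm(L)=H^{\depth R/(J+H)}_\mm(R/(J+H))(-1)$ is $f$-torsion; note that here $\depth R/(J+H)=\depth R/J$. To reach a contradiction I would bring in the two companion sequences attached to $J\subseteq J+H$ and $J\subseteq I$, namely, with $W=(J+H)/J$,
$$0\to W\to R/J\to R/(J+H)\to 0 \qquad\text{and}\qquad 0\to W(-1)\xrightarrow{\ f\ }R/J\to R/I\to 0,$$
the second inclusion being multiplication by $f$ (with image $I/J=fW$). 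Comparing their local cohomology in cohomological degree $\depth R/J$ and using that $f$ is a non-zerodivisor on $R/J$, one finds that $f$ would act as an injective degree-one endomorphism of the nonzero module $H^{\depth R/J}_\mm(R/J)$; as this local cohomology module is Artinian and hence vanishes in all high degrees, its top nonzero graded piece maps to zero, the desired contradiction. The regularity boundary case is settled by the top-degree dual of the same argument. I expect this final conversion — from the $f$-torsion of the connecting maps to the impossibility of an injective degree-raising endomorphism on a bounded-above local cohomology module of $R/J$ — to be the main technical hurdle.
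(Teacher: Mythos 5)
Your proposal is correct, but it takes a genuinely different route from the paper's proof. The paper passes to the auxiliary ring $S=R[x]$ and the ideal $Q=(J,xH)$: after checking that $x-f$ is a non-zerodivisor on $S/Q$ (so that $R/I=S/(Q,x-f)$ inherits depth and regularity from $S/Q$), it computes $Q/xQ\cong J\oplus\big((J+H)/J\big)(-1)$ and thereby obtains the exact formulas $\depth R/I=\min\{\depth R/J,\ \depth (J+H)/J-1\}$ and $\reg R/I=\max\{\reg R/J,\ \reg (J+H)/J\}$, from which (i) and (ii) follow by a single application of Lemma \ref{exact} to $0\to J\to J+H\to (J+H)/J\to 0$. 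You instead stay inside $R$, reduce everything to the sequence $(\ast)$, and must then kill the two boundary configurations; your mechanism for this -- the connecting maps out of $H^i_\mm(R/(J,f))$ have $f$-torsion image -- is sound, and I checked that the chase closes as you predict. In the depth case with $d=\depth R/J=\depth R/(J+H)$ and $\depth R/I>d$: the sequence $0\to W(-1)\to R/J\to R/I\to 0$ makes multiplication by $f$ surjective on $H^d_\mm(R/J)$, while $f\cdot H^d_\mm(R/(J+H))=0$ forces its image into the image of $H^d_\mm(W)\to H^d_\mm(R/J)$, so that map is onto, hence an isomorphism, hence $f$ acts bijectively -- in particular injectively -- on the nonzero, bounded-above module $H^d_\mm(R/J)$, which is impossible; the regularity case is the same chase on the graded pieces $H^{i_0}_\mm(-)_{j_0}$ with $i_0+j_0=\reg R/J$. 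What the paper's route buys is the sharper equalities (not merely membership in a two-element set) and an immediate extension to forms $f$ of higher degree (Corollary \ref{cor.GLFgeneral}); what your route buys is that it never leaves $R$, isolates exactly where the non-zerodivisor hypothesis enters, and identifies the unique obstruction as an injective degree-raising endomorphism of a bounded-above local cohomology module. The cost is that the two boundary cases need the diagram chase spelled out above, which you correctly flagged as the main hurdle but left unexecuted.
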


\begin{proof}
Let $S = R[x]$, where $x$ is a new variable, and $Q = (J,xH)$. Then
$$R/I = S/(Q,x-f).$$

We shall show that $x-f$ is a non-zerodivisor in $S/Q$. Indeed, it is clear that  $Q = (J,x) \cap (J,H)$.
Observe that every associated prime of $(J,x)$ is of the form $(\pp,x)$, where $\pp$ is an associated prime of $J$, and $f \not\in \pp$ because $f$ is a non-zerodivisor in $R/J$. Thus, $x-f$ does not belong to any associated prime of $(J,x)$. Hence, $(J,x): (x-f) = (J,x)$. Since every associated prime of $(J,H)$ is the extension to $S$ of an associated prime of $J+H \subseteq R$, $x-f$ does not belong to any associated prime of $(J,H)$. Therefore, $(J,H):(x-f) = (J,H)$.
Hence, we get $Q: (x-f) = Q$. \par

It follows from the fact that $x-f$ is a non-zerodivisor of $S/Q$ that
$$\depth R/I = \depth S/Q -1 = \depth Q - 2 = \depth Q/xQ - 1.$$
It is easy to see that $Q/xQ = J \oplus ((J+H)/J)(-1)$.
Thus,
$$\depth Q/xQ  =  \min\big\{\depth J,\depth (J+H)/J\big\}.$$
By Lemma \ref{exact}(i), from the exact sequence
$$0 \to  J \to (J+H) \to (J+H)/J \to 0,$$
we obtain
$$\depth (J+H) \ge \min\{\depth J, \depth (J+H)/J\}.$$
Therefore,
$$\depth R/I = \depth Q/xQ - 1 \le \depth (J+H) - 1= \depth R/(J+H).$$

Now, if $\depth R/I \neq \depth R/(J+H)$ then it follows that
$$\depth R/I < \depth R/(J+H) = \depth R/I:f.$$
Applying Lemma \ref{exact}(ii) to the exact sequence
$$0 \to R/I:f \stackrel{f} \to R/I \to R/(I,f) \to 0,$$
we get $\depth R/I = \depth R/(I,f) = \depth R/(J,f) = \depth R/J - 1$ as required for Theorem \ref{rmk.GLF}(i).
\par

The proof for Theorem \ref{rmk.GLF}(ii) proceeds similarly. Firstly, we have
$$\reg R/I = \reg S/Q = \reg Q -1 = \reg Q/xQ - 1.$$
On the other hand,
$$\reg Q/xQ = \max\{\reg J, \reg (J+H)/J+1\}.$$
Therefore,
\begin{align*}
\reg R/I  & =  \max\big\{\reg J,\reg (J+H)/J+1\big\} -1\\
& = \max\big\{\reg R/J, \reg (J+H)/J\big\}.
\end{align*}
If $\reg R/I \neq \reg R/J$ then
$$\reg R/I = \reg (J+H)/J > \reg R/J.$$
By Lemma \ref{exact}(iii), from the short exact sequence
$$0 \to  (J+H)/J \to R/J \to R/(J+H) \to 0,$$
we get $\reg (J+H)/J = \reg R/(J+H) + 1$. Hence, $\reg R/I = \reg R/(J+H)+1$ as required.
\end{proof}

It is easy to see that $(I: f) = J+H$ and $(I,f) = (J,f)$ under the assumption of Theorem \ref{rmk.GLF}.
Hence, we can rewrite the formulae of Theorem \ref{rmk.GLF} as
\begin{enumerate}[{\indent \rm (i)}]
\item $\depth R/I \in \big\{ \depth R/(I:f), \depth R/(I,f)\big\},$
\item $\reg R/I \in \big\{\reg R/(I:f)+1, \reg R/(I,f)\big\}.$
\end{enumerate}

Our proof of Theorem \ref{rmk.GLF} in fact yields a slightly more general statement where $f$ is of higher degree.

\begin{corollary} \label{cor.GLFgeneral}
Let $I = J + fH,$ where $J$ and $H$ are homogeneous ideals in $R$ and $f$ is any form of degree $d$ that is a non-zerodivisor of $R/J$. Then
\begin{enumerate}[{\indent \em (i)}]
\item $\depth R/I \in \big\{ \depth R/(J+H), \depth R/J-1\big\}.$
\item $\reg R/I \in \big\{\reg R/(J+H)+d, \reg R/J+d-1, \dots , \reg R/J\big\}.$
\end{enumerate}
\end{corollary}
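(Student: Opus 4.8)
The plan is to run the same argument as in the proof of Theorem~\ref{rmk.GLF}, introducing the auxiliary variable $x$ but now assigning it degree $d = \deg f$, so that $x-f$ remains homogeneous. Setting $S = R[x]$ with $\deg x = d$ and $Q = (J, xH)$, we again have $R/I = S/(Q, x-f)$, and the verification that $x-f$ is a non-zerodivisor of $S/Q$ carries over verbatim: the decomposition $Q = (J,x)\cap(J,H)$ and the associated-prime computation (using that $f\notin\pp$ for every associated prime $\pp$ of $J$) never refer to the degree of $x$. Likewise the identities $I:f = J+H$ and $(I,f)=(J,f)$ still hold, by the same short computation.

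For part (i) nothing changes. Depth is insensitive to internal degrees, so the chain $\depth R/I = \depth Q/xQ - 1$ together with the decomposition $Q/xQ = J \oplus \big((J+H)/J\big)(-d)$ (the shift $(-d)$ merely replacing $(-1)$) produces exactly the inequalities of Theorem~\ref{rmk.GLF}(i). The transition between the two possible values is again governed by Lemma~\ref{exact}(ii) applied to $0 \to R/(I:f) \stackrel{f}{\to} R/I \to R/(I,f) \to 0$, giving $\depth R/I \in \{\depth R/(J+H), \depth R/J - 1\}$.

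The degree $d$ enters only in part (ii). As in Theorem~\ref{rmk.GLF}(ii) the regularity identities collapse to $\reg R/I = \reg Q/xQ - 1$: the shift introduced by the degree-$d$ element $x-f$ on $S/Q$, the shift introduced by the degree-$d$ element $x$ on $Q$, and the relation $\reg Q = \reg S/Q + 1$ combine so that the net effect is the same single shift as in the linear case. What does change is the internal grading, since the decomposition above now yields
\begin{align*}
\reg Q/xQ = \max\big\{\reg J,\ \reg (J+H)/J + d\big\},
\end{align*}
and hence
\begin{align*}
\reg R/I = \max\big\{\reg R/J,\ \reg (J+H)/J + d - 1\big\}.
\end{align*}
Writing $r_1 = \reg R/J$ and $m = \reg (J+H)/J$, I would finish with a case analysis on the size of $m$ relative to $r_1$. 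If $m \le r_1$, then $\reg R/I = \max\{r_1, m+d-1\}$ lies in $\{r_1, r_1+1, \dots, r_1+d-1\}$; if $m > r_1$, then Lemma~\ref{exact}(iii) applied to $0 \to (J+H)/J \to R/J \to R/(J+H) \to 0$ forces $m = \reg R/(J+H) + 1$, whence $\reg R/I = m + d - 1 = \reg R/(J+H) + d$. In either case $\reg R/I$ lands in the stated set $\{\reg R/(J+H)+d,\ \reg R/J + d - 1, \dots, \reg R/J\}$.

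The main obstacle is that assigning $\deg x = d$ makes $S = R[x]$ non-standard graded, so that $\reg S = 1 - d \neq 0$ and the convenient Betti-number shortcuts---namely $\reg Q = \reg S/Q + 1$ and the shift $\reg M/\ell M = \reg M + d - 1$ for a degree-$d$ non-zerodivisor $\ell$---cannot simply be quoted from the standard-graded setting. I would therefore establish each of the three regularity identities directly from the long exact sequences of local cohomology, using the description of $\reg$ and of $a_i$ in Remark~\ref{a-invariant}, and checking that the only nonvanishing cohomology of $S$, the top module $H^{n+1}_{\mm}(S)$ with $a$-invariant $-(n+d)$, never dominates the maxima computing the relevant regularities. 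Once these shifts are justified in the $\ZZ$-graded non-standard setting, the remainder of the argument is purely formal.
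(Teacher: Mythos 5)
Your argument is correct, but it takes a genuinely different route from the paper's. The paper keeps $S=R[x]$ standard graded, sets $Q=(J,x^dH)$, checks that $x^d-f$ is a non-zerodivisor on $S/Q$, and then obtains the statement by applying Theorem~\ref{rmk.GLF} iteratively $d$ times along the chain $(J,x^jH)=J+x\cdot(x^{j-1}H)$, $j=d,\dots,1$; the list of possible regularity values in (ii) is exactly the union of outcomes over the $d$ stages. You instead assign $\deg x=d$ so that a \emph{single} pass through the argument of Theorem~\ref{rmk.GLF} applies, at the price of working in a non-standard graded polynomial ring. You correctly identify the resulting obligation --- re-proving $\reg S/(Q,x-f)=\reg S/Q+d-1$, $\reg S/Q=\reg Q-1$ and $\reg Q/xQ=\reg Q+d-1$ via local cohomology rather than Betti numbers --- and these identities do hold (the two $(d-1)$-shifts enter with opposite signs, so the net shift is $-1$ as you claim; one only needs that $\reg$ of $Q$ and of $S/Q$ is attained in cohomological degree $\geq 1$, which follows from $\depth\geq 1$, and that $a_{n+1}(S)=-(n+d)$ never dominates). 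Your route buys a sharper closed-form output, namely $\depth R/I=\min\{\depth J,\depth(J+H)/J\}-1$ and $\reg R/I=\max\{\reg R/J,\ \reg(J+H)/J+d-1\}$, from which the stated membership follows by your case analysis with Lemma~\ref{exact}(iii); the paper's iteration buys brevity and stays entirely within the standard graded setting. Both are valid; if you write yours up, the three graded shift identities should be proved explicitly rather than asserted, since they are the only point where the linear-case proof does not literally carry over.
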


\begin{proof} As in the proof of Theorem \ref{rmk.GLF}, consider $S = R[x]$, where $x$ is a new variable, and $Q = (J, x^dH)$. The same argument shows that $x^d-f$ is a non-zerodivisor in $S/Q$. Thus, we may again reduce to the statements for $Q \subseteq S$. Observe that $Q = (J, x(x^{d-1}H))$. The assertion now follows by an iterated application of Theorem \ref{rmk.GLF}.
\end{proof}

The assumption of Theorem \ref{rmk.GLF} is obviously satisfied if $I$ is any monomial ideal and $f$ is any variable.
Therefore, we have the following consequence.

\begin{corollary} \label{DHS}
Let $I$ be a monomial ideal and let $x$ be a variable in $R$. Then
\begin{enumerate}[{\indent \rm (i)}]
\item $\depth R/I \in \big\{ \depth R/(I:x), \depth R/(I,x)\big\},$
\item $\reg R/I \in \big\{\reg R/(I:x)+1, \reg R/(I,x)\big\}.$
\end{enumerate}
\end{corollary}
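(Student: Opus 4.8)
The plan is to reduce Corollary~\ref{DHS} directly to Theorem~\ref{rmk.GLF} by producing, for a given monomial ideal $I$ and variable $x = x_i$, a decomposition $I = J + xH$ that meets the theorem's hypotheses. Writing $G(I)$ for the set of minimal monomial generators of $I$, I would split $G(I)$ according to divisibility by $x$: let $J$ be the monomial ideal generated by those $m \in G(I)$ with $x \nmid m$, and let $H$ be the monomial ideal generated by the quotients $m/x$ as $m$ ranges over the generators in $G(I)$ that are divisible by $x$. By construction $I = J + xH$, so the shape required by Theorem~\ref{rmk.GLF} is in place with $f = x$ a linear form.

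Next I would confirm the one nontrivial hypothesis, that $x$ is a non-zerodivisor of $R/J$. Since every generator of $J$ is a monomial not involving $x$, the ideal $J$ is extended from the subring $R' = k[x_1, \ldots, x_{i-1}, x_{i+1}, \ldots, x_n]$; concretely $R/J \cong (R'/J')[x]$ for the corresponding monomial ideal $J' \subseteq R'$, so $x$ acts as a polynomial variable over $R'/J'$ and is therefore a non-zerodivisor of $R/J$. (Equivalently, no associated prime of the monomial ideal $J$ can contain $x$, as those primes are generated by variables actually occurring in the generators of $J$.) With this verified, Theorem~\ref{rmk.GLF} applies, and the remark recorded just after it gives $I:x = J + H$ and $(I,x) = (J,x)$, so the two formulas of that theorem read
$$\depth R/I \in \{\depth R/(J+H), \depth R/J - 1\}, \qquad \reg R/I \in \{\reg R/(J+H)+1, \reg R/J\}.$$

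It then remains only to rewrite the terms coming from $J$ in the claimed form. Because $x$ is a linear non-zerodivisor of $R/J$, the short exact sequence $0 \to (R/J)(-1) \xrightarrow{x} R/J \to R/(J,x) \to 0$ yields simultaneously $\depth R/(J,x) = \depth R/J - 1$ and $\reg R/(J,x) = \reg R/J$. Substituting these together with $I:x = J+H$ into the two displayed memberships turns them into exactly parts (i) and (ii). I do not expect a genuine obstacle: the entire content is the bookkeeping of the decomposition and the routine non-zerodivisor check. The single point demanding care is the last step, where one must correctly invoke that quotienting by a linear non-zerodivisor \emph{preserves} regularity while \emph{lowering} depth by one, so that the $J$-terms produced by Theorem~\ref{rmk.GLF} align with the $(I,x)$-terms appearing in the corollary.
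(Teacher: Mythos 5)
Your proposal is correct and follows exactly the paper's route: the paper derives Corollary~\ref{DHS} by observing that any monomial ideal admits the decomposition $I = J + xH$ with $x$ a non-zerodivisor of $R/J$, then invokes Theorem~\ref{rmk.GLF} together with the identifications $I:x = J+H$ and $(I,x)=(J,x)$ recorded in the remark following that theorem. You merely spell out the details (the splitting of the generators, the non-zerodivisor check, and the conversion of the $J$-terms via the linear regular element) that the paper leaves implicit.
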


Note that Corollary \ref{DHS}(ii) was already proved by Dao, Huneke and Schweig \cite[Lemma 2.10]{DHS}.
They also proved Corollary \ref{DHS}(i) for squarefree monomial ideals \cite[Lemma 5.1]{DHS}. Their proofs are based
 on a result of Kummini \cite{Ku} on multigraded Betti numbers of squarefree monomial ideals. \par

The following examples show that Corollary \ref{DHS} does not hold for an arbitrary linear form $f$.

\begin{example}
Let $I = (x_1x_3,x_2x_3,x_1x_4)$ and $f = x_1-x_3$ in $R = \QQ[x_1,x_2,x_3,x_4]$. Then $\depth R/I = 2$, $\depth R/(I:f) = 1$ and $\depth R/(I,f) = 0$.
\end{example}

\begin{example}
Let $I = (x_2^3, x_2x_3x_5, x_3x_8^2x_9, x_5x_7x_9^3)$ and $f = x_3-x_5$ in $R = \QQ[x_1, \dots, x_9]$. Then $\reg R/I = 6$, $\reg R/(I:f) = 7$ and $\reg R/(I,f) = 7$.
\end{example}

Under the assumption that $f$ is general enough, Theorem \ref{rmk.GLF} gives the following result.

\begin{corollary} \label{thm.GLF}
Let $f \in R$ be a linear form, which is filter-regular in $R/I$. Then
\begin{enumerate}[{\indent \rm (i)}]
\item $\depth R/I \in \big\{ \depth R/(I:f), \depth R/(I,f)\big\}.$
\item $\reg R/I \in \big\{\reg R/(I:f)+1, \reg R/(I,f)\big\}.$
\end{enumerate}
\end{corollary}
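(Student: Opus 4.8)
The plan is to carry out the whole analysis through the two short exact sequences
\begin{equation*}
0 \to (R/(I:f))(-1) \xrightarrow{\ f\ } R/I \to R/(I,f) \to 0 \tag{$\star$}
\end{equation*}
and
\begin{equation*}
0 \to N \to R/I \to R/(I:f) \to 0, \qquad N := (I:f)/I, \tag{$\star\star$}
\end{equation*}
which is the same circle of ideas behind Theorem \ref{rmk.GLF}. The filter-regular hypothesis enters in exactly one place: since $f \notin \pp$ for every associated prime $\pp \neq \mm$ of $R/I$, every associated prime of $N = (0 :_{R/I} f)$ contains $f$ (as $f$ annihilates $N$) and hence equals $\mm$; thus $N$ has finite length. (Conversely, finite length of $N$ is equivalent to filter-regularity.) This finiteness is what will replace the non-zerodivisor hypothesis of Theorem \ref{rmk.GLF}.

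For the depth statement (i), I would first dispose of the case $N = 0$, where $f$ is a non-zerodivisor of $R/I$, so $I : f = I$ and there is nothing to prove. If $N \neq 0$, then $R/I$ has a nonzero finite-length submodule, so $H^0_\mm(R/I) \neq 0$ and $\depth R/I = 0$; it then suffices to show $0 \in \{\depth R/(I:f), \depth R/(I,f)\}$. Indeed, if $\depth R/(I:f) = 0$ we are done, and otherwise $\depth R/(I:f) > 0 = \depth R/I$, so Lemma \ref{exact}(ii) applied to $(\star)$ (whose left-hand term has the same depth as $R/(I:f)$) gives $\depth R/I = \depth R/(I,f) = 0$.

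For the regularity statement (ii), the main step is to extract from $(\star\star)$ the identity
\begin{equation*}
\reg R/I = \max\{\reg N,\ \reg R/(I:f)\}. \tag{$\dagger$}
\end{equation*}
Here the finite length of $N$ is decisive: its long exact sequence of local cohomology collapses to isomorphisms $H^i_\mm(R/I) \cong H^i_\mm(R/(I:f))$ for all $i \geq 1$ together with a short exact sequence $0 \to N \to H^0_\mm(R/I) \to H^0_\mm(R/(I:f)) \to 0$. Reading off the invariants $a_i$ of Remark \ref{a-invariant} then yields $a_i(R/I) = a_i(R/(I:f))$ for $i \geq 1$ and $a_0(R/I) = \max\{\reg N,\ a_0(R/(I:f))\}$, which gives $(\dagger)$. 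I expect this to be the genuinely delicate point, and it is precisely where finite length of $N$ is indispensable: a priori the finite-length kernel in $(\star\star)$ could create cancellation that lowers $\reg R/I$ below $\reg R/(I:f)$, and only the local cohomology comparison rules this out.

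With $(\dagger)$ in hand the argument closes by an elementary case split. Writing $p = \reg R/(I:f)$ and $q = \reg R/(I,f)$, if $\reg N \leq p$ then $(\dagger)$ gives $\reg R/I = p < p+1 = \reg (R/(I:f))(-1)$, so Lemma \ref{exact}(iii) applied to $(\star)$ forces $p+1 = q+1$, i.e. $\reg R/I = q$. If instead $\reg N > p$, then $\reg R/I = \reg N > p$ by $(\dagger)$, and the standard regularity estimates for the sequence $(\star)$, namely $\reg R/I \leq \max\{p+1, q\}$ and $q \leq \max\{\reg R/I, p\}$, pin the value down: when $q \leq p+1$ the first estimate forces $\reg R/I = p+1$, and when $q > p+1$ the two together force $\reg R/I = q$. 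In every case $\reg R/I \in \{p+1, q\}$, as required.
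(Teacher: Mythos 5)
Your proof is correct, but it takes a genuinely different route from the paper's. The paper deduces the corollary from Theorem \ref{rmk.GLF} by Gr\"obner degeneration: after passing to an infinite field and a generic change of coordinates it arranges (by prime avoidance) that $f=x_n$ and that the variables form filter-regular sequences modulo $I$, $I:x_n$ and $(I,x_n)$, invokes Theorem \ref{ini} to replace $I$, $I:x_n$ and $(I,x_n)$ by their reverse-lexicographic initial ideals, and then applies the monomial-ideal decomposition $\ini(I)=J+x_nH$ of Theorem \ref{rmk.GLF}. You instead argue directly: the single observation that filter-regularity of $f$ forces $N=(I:f)/I$ to have finite length collapses the local cohomology long exact sequence of $0\to N\to R/I\to R/(I:f)\to 0$ into $H^i_\mm(R/I)\cong H^i_\mm(R/(I:f))$ for $i\ge 1$ plus a degree-zero comparison, yielding the identity $\reg R/I=\max\{\reg N,\reg R/(I:f)\}$, after which the multiplication sequence and Lemma \ref{exact} finish both parts. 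Your approach buys self-containedness: no base change to an infinite field, no generic coordinates, no initial ideals, and only the filter-regularity of $f$ itself is used (the paper needs a full filter-regular sequence in three different quotients); your identity $(\dagger)$ is in effect a sharpening of Lemma \ref{colon}(ii) in this setting. The paper's route buys uniformity, funnelling everything through the already-proved Theorem \ref{rmk.GLF}. One small remark: in the final case split you invoke the inequalities $\reg M\le\max\{\reg L,\reg N\}$ and $\reg N\le\max\{\reg M,\reg L-1\}$ for a short exact sequence $0\to L\to M\to N\to 0$; these are not among the statements of Lemma \ref{exact}, but they follow from the same long exact sequence of local cohomology, so this is a harmless appeal to standard facts rather than a gap.
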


\begin{proof}  
Without restriction we may assume that $k$ is an infinite field.
Using prime avoidance and a linear change of variables, we may assume that $f = x_n$ and \par
(1) $x_n, \dots,x_1$ form a filter-regular sequence in $R/I$,\par
(2) $x_n, \dots, x_1$ form a filter-sequence in $R/(I:x_n)$,\par
(3)  $x_{n-1}, \dots, x_1$ form a filter-sequence in $R/(I,x_n)$.\par

\noindent
Let $\ini(I)$ denote the initial ideal of $I$ with respect to the reverse lexicographic order.
By Theorem \ref{ini}, we have \par
(4) $\depth R/I = \depth R/\ini(I)$ and $\reg R/I = \reg R/\ini(I),$ \par
(5) $\depth R/I:x_n = \depth R/\ini(I:x_1)$ and $\reg R/I:x_n = \reg R/\ini(I:x_n),$\par
(6) $\depth R/(I,x_n) = \depth R/\ini(I,x_n)$ and $\reg R/(I,x_n) = \reg R/\ini(I,x_n).$\par

\noindent It is also known (see \cite[Proposition 15.12]{E}, \cite[Lemma 4.3.7]{HH1}) that \par
(7) $\ini(I : x_n) = \ini(I) : x_n,$\par
(8) $\ini(I, x_n) = \ini(I) + (x_n).$\par

\noindent Since $\ini(I)$ is a monomial ideal, there are monomial ideals $J$ and $H$ such that $\ini(I) = J +x_nH$ and $x_n$ is a non-zerodivisor in $R/J$. Obviously, we have \par
(9) $\ini(I):x_n = J+H,$\par
(10) $\ini(I) + (x_n) = (J,x_n).$\par

\noindent Using (4)-(10), we can express the asserted formulas as
\begin{align*}
\depth R/\ini(I) & \in \big\{ \depth R/(J+H), \depth R/J -1\big\},\\
\reg R/\ini(I) & \in \big\{\reg R/(J+H)+1, \reg R/J\big\}.
\end{align*}
The conclusion now follows from Theorem \ref{rmk.GLF}.
\end{proof}

Corollary \ref{thm.GLF} is only interesting in the case $\depth R/I = 0$.
If $\depth R/I > 0$, then $f$ must be a regular element; hence $\depth R/I = \depth R/(I: f)$ and $\reg R/I = \reg R/(I,f)$.


\section{Depth and regularity colon a monomial} \label{sec.Mon}

It is natural to ask whether one can extend the results of Dao, Huneke and Schweig to the case where $x$ is replaced  by a form $f$ of higher degree. We shall see that
the same relationship between the depths of
$R/I, R/(I:f)$ and $R/(I,f)$ as in Corollary \ref{DHS}(i) holds for any monomial ideal $I$ and any monomial $f$.
We shall also give examples showing that this is no longer true if $I$ is not a monomial ideal or if $f$ is not a monomial, and that there is no similar relationship between the regularities of $R/I, R/(I:f)$ and $R/(I,f)$ as in Corollary \ref{DHS}(ii).

The investigation in this section is based on  the following relationships between $R/I$ and $R/(I:f)$,
which were proved in \cite[Corollary 1.3]{Ra} for (i) and in \cite[Lemma 4.2]{SF} for (ii) by different methods.  
 
\begin{lemma} \label{colon}
Let $I$ be a monomial ideal and let $f$ be an arbitrary monomial in $R$. Then
\begin{enumerate}[{\indent \em (i)}]
\item $\depth R/I \le \depth R/(I:f),$
\item $\reg R/I  \ge  \reg R/(I:f).$
\end{enumerate}
\end{lemma}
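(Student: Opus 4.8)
The plan is to prove both inequalities using Takayama's formula as encoded in Proposition~\ref{complex}, since both $\depth$ and $\reg$ of a monomial ideal are read off from the nonvanishing of reduced homology of the degree complexes $\D_\a(\cdot)$. The key observation I would establish first is a comparison between the degree complexes of $I$ and of $I:f$. Writing $f = x^\b$ for some $\b \in \NN^n$, the defining condition for $\D_\a(I:f)$ involves whether $x^\a \notin (I:f)R_F$, equivalently whether $x^{\a+\b} \notin IR_F$ (using that inverting the variables in $F$ commutes with forming the colon, at least for the relevant $\b$). So I expect to show $\D_\a(I:f) = \D_{\a+\b}(I)$, or at least a containment/equality relating the two complexes for appropriate choices of $\a$. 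The subtlety is matching up the sign sets $G_\a$ and $G_{\a+\b}$: since $\b \in \NN^n$ has nonnegative entries, adding $\b$ can only move coordinates from negative to nonnegative, so $G_{\a+\b} \subseteq G_\a$, and I must track this carefully because $|G_\a|$ enters the homological degree shift in both formulas.

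For part (i), I would argue as follows. Suppose $\widetilde H_{i-1}(\D_\a(I),k) \neq 0$ realizes the minimum $\depth R/I = |G_\a| + i$. The goal is to produce some $\a'$ with $\widetilde H_{i'-1}(\D_{\a'}(I:f),k) \neq 0$ and $|G_{\a'}| + i' \le |G_\a| + i$, which by Proposition~\ref{complex}(i) forces $\depth R/(I:f) \ge \depth R/I$. Via the comparison $\D_\a(I:f) = \D_{\a+\b}(I)$, the natural candidate is to start instead from an $\a$ computing $\depth R/I$ and push it through the identification; the point is that the colon ideal's degree complexes are among (a shifted reindexing of) those of $I$, so every homological nonvanishing contributing to the invariant of $I:f$ already appears for $I$, giving the desired direction of the inequality. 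I would need to check that the $|G_\a| + i$ value is preserved or decreased under the reindexing $\a \mapsto \a - \b$ or $\a \mapsto \a + \b$; here the containment $G_{\a+\b} \subseteq G_\a$ works in our favor for one of the two inequalities.

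For part (ii), the strategy is parallel but uses Proposition~\ref{complex}(ii), where the relevant quantity is $|\a| + |G_\a| + i$. Since passing from $\a$ to $\a+\b$ changes $|\a|$ by $+|\b| = +\deg f$ while changing $|G_\a|$ by a nonpositive amount, the bookkeeping is slightly different, and I expect the $\deg f$ shift to appear naturally and then get absorbed because the inequality in (ii) points the opposite way from (i). Concretely, from an $\a$ realizing $\reg R/(I:f)$ I would produce via the comparison a degree vector realizing at least as large a value of $|\a| + |G_\a| + i$ for $I$ itself, giving $\reg R/I \ge \reg R/(I:f)$.

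The main obstacle will be establishing the degree-complex comparison $\D_\a(I:f) = \D_{\a+\b}(I)$ cleanly and correctly tracking the index set $G_\a$ through the shift by $\b$. The condition $x^\a \notin (I:f)R_F$ must be translated into a condition on $x^{\a+\b}$ and $I$, and one has to verify that localizing at the variables in $F$ interacts correctly with the colon operation for the exponent vectors that arise; negative entries of $\a$ combined with the localization $R_F$ require care. Once this identification (or the appropriate one-sided containment sufficient for each inequality) is in place, both parts follow by the same homological comparison through Proposition~\ref{complex}, with the $\deg f$ shift in $|\a|$ being the only extra ingredient distinguishing the regularity statement from the depth statement.
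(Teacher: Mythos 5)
Your overall strategy is the paper's: both parts are read off from Proposition~\ref{complex} by identifying a degree complex of $I:f$ with a degree complex of $I$ shifted by $\b$. But the identification you propose, $\D_\a(I:f)=\D_{\a+\b}(I)$, is exactly where the argument breaks, and you flag the problem without resolving it. When some coordinate $a_j$ is negative but $a_j+b_j\ge 0$, the sign sets differ, $G_{\a+\b}\subsetneq G_\a$; then the two complexes are built from different collections of admissible $F$ (those containing $G_\a$ versus those containing $G_{\a+\b}$), their faces are $F\setminus G_\a$ versus $F\setminus G_{\a+\b}$, and the homological degree in Takayama's formula shifts by $|G_\a|$ versus $|G_{\a+\b}|$. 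A one-sided containment of complexes would not rescue you either, since nonvanishing of reduced homology is not monotone under inclusion of simplicial complexes. The missing idea is a correction vector: choose $\c\in\NN^n$ with $\supp(\c)\subseteq G_\a$ such that $G_{\a+\b-\c}=G_\a$ (for instance $c_j=b_j$ for $j\in G_\a$ and $c_j=0$ otherwise). Since $x^\c$ is invertible in $R_F$ for every $F\supseteq G_\a$, the condition $x^{\a+\b}\in IR_F$ is equivalent to $x^{\a+\b-\c}\in IR_F$, and now the sign sets agree on the nose, giving the genuine equality $\D_\a(I:f)=\D_{\a+\b-\c}(I)$. For part (ii) the choice $\c\le\b$ also yields $|\a+\b-\c|\ge|\a|$, which is what lets the inequality come out without any loss of $\deg f$.

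There is also a logical slip in your part (i): you start from an $\a$ realizing $\depth R/I$ and aim to produce a nonvanishing class for $I:f$ with value at most $|G_\a|+i$; by Proposition~\ref{complex}(i) that would yield $\depth R/(I:f)\le\depth R/I$, the reverse of what is claimed. The transfer must go the other way: take $\a$ realizing the minimum for $I:f$, produce via the identification above a nonvanishing class for $I$ with the same value $|G_\a|+i$, and conclude $\depth R/I\le|G_\a|+i=\depth R/(I:f)$. Your closing remark that every homological nonvanishing contributing to the invariant of $I:f$ already appears for $I$ is the correct principle; the written plan for (i) contradicts it.
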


\begin{proof}
(i) By Proposition \ref{complex}(i), there exists $\a \in \ZZ^n$  such that $\depth R/(I:f) = |G_\a| + i$ and
$\tilde H_{i-1}(\D_{\a}(I:f),k) \neq 0$ for some $i \ge 0$. Let $f = x^\b$, $\b \in \NN^n$.
Since $G_\a := \{j|\ a_j < 0\}$, there exists $\c \in \NN^n$ such that $\supp(\c) \subseteq G_\a$ and $G_{\a+\b-\c} = G_\a$.
For every subset $F \subset [1,n]$,  $F \supseteq G_\a$, the element $x^\c$ is invertible in $R_F := k[x_i^{-1}|\ i \in F]$.
Therefore, $x^\a \in (I:f)R_F$ if and only if $x^{\a+\b} \in IR_F$ if and only if $x^{\a+\b-\c} \in IR_F$.
By the definition of the degree complex, this implies that $\D_{\a}(I:f) = \D_{\a +\b-\c}(I).$
Hence, $\tilde H_{i-1}(\D_{\a+\b-\c}(I),k) \neq 0$.
By Proposition \ref{complex} (i), it follows that
$$\depth R/I \le |G_{\a+\b-\c}| + i = |G_\a|+ i = \depth R/(I:f).$$

(ii) By Proposition \ref{complex}(ii), there exists $\a \in \ZZ^n$ such that $\reg (I:f) = |\a| + |G_\a| + i$ and
$\tilde H_{i-1}(\D_{\a}(I:f),k) \neq 0$ for some $i \ge 0$. By the maximality of $|\a| + |G_\a| + i$ we must have $a_j = -1$ for $j \in G_\a$. Let $\c \in \NN^n$ with $c_i = b_i$ for $i \in G_\a$ and $c_i = 0$ else.
Then $\supp(\c) \subseteq G_\a$ and $G_{\a+\b-\c} = G_\a$. Moreover,
$|\a + \b-\c| \ge |\a|$.
Similarly as above, we have $\D_{\a}(I:f) = \D_{\a +\b-\c}(I).$
Hence, $\tilde H_{i-1}(\D_{\a+\b-\c}(I),k) \neq 0$.
By Proposition \ref{complex}(ii), this implies that
$$\reg R/I \ge |\a+\b-\c| + |G_\a| + i \ge |\a| + |G_\a| + i = \reg R/(I:f).$$
\end{proof}

One might expect that $\reg R/I \ge \reg R/(I:f)+ \deg f$.
As shown by the following example, this bound does not hold,
even when $f$ is a variable appearing in the generators of $I$.

\begin{example} \label{example1}
Let $R = \QQ[x_1, \dots, x_9]$ and consider
$$I = (x_2x_3, x_2x_4, x_3x_5, x_4x_5, x_2x_5x_7, x_4x_8, x_3x_9, x_4x_9, x_7x_9, x_8x_9).$$
Then, for $f = x_8$, we have $\reg R/I = 2 = \reg R/(I:f)$. Thus, $\reg R/I \not\ge \reg R/(I:f) + 1.$
\end{example}

Our next result shows that the conclusion of Corollary \ref{DHS}(i) also holds for an arbitrary monomial $f$.

\begin{theorem} \label{thm.inclusion}
Let $I$ be a monomial ideal and let $f$ be an arbitrary monomial in $R$. Then
\begin{enumerate}[{\indent \em (i)}]
\item $\depth R/I \in \{\depth R/(I:f),  \depth R/(I,f)\}.$
\item  $\depth R/I = \depth R/(I:f)$ if $\depth R/(I,f) \ge \depth R/(I:f)$.
\end{enumerate}
\end{theorem}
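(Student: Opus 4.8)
The plan is to exploit the canonical short exact sequence
$0 \to R/(I:f) \stackrel{f}{\to} R/I \to R/(I,f) \to 0$
together with the two lemmas already in hand: the purely homological inequalities of Lemma \ref{exact}, which hold for any short exact sequence, and the monomial-specific comparison $\depth R/I \le \depth R/(I:f)$ from Lemma \ref{colon}(i). The entire theorem should fall out of combining these, with no combinatorial input beyond Lemma \ref{colon}.

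First I would prove (i). Lemma \ref{colon}(i) supplies the upper bound $\depth R/I \le \depth R/(I:f)$, so there are only two possibilities. If $\depth R/I = \depth R/(I:f)$, then the first alternative holds and we are done. Otherwise $\depth R/(I:f) > \depth R/I$, which is exactly the hypothesis of Lemma \ref{exact}(ii) applied to the sequence above with $L = R/(I:f)$, $M = R/I$, and $N = R/(I,f)$; that lemma then forces $\depth R/I = \depth R/(I,f)$, giving the second alternative. In either case $\depth R/I$ lands on one of the two prescribed values, establishing (i).

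For (ii), assume $\depth R/(I,f) \ge \depth R/(I:f)$. Lemma \ref{exact}(i) gives $\depth R/I \ge \min\{\depth R/(I:f), \depth R/(I,f)\} = \depth R/(I:f)$, while Lemma \ref{colon}(i) gives the reverse inequality $\depth R/I \le \depth R/(I:f)$; hence equality. Equivalently, in the dichotomy of part (i), the second alternative $\depth R/I = \depth R/(I,f)$ would, under the strict inequality $\depth R/I < \depth R/(I:f) \le \depth R/(I,f)$, contradict itself, so only the first alternative can survive.

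The only genuinely nontrivial ingredient is the inequality $\depth R/I \le \depth R/(I:f)$ of Lemma \ref{colon}(i), whose proof rests on Takayama's formula and the identification $\D_\a(I:f) = \D_{\a+\b-\c}(I)$ of degree complexes; but this is already established before the statement. Granting it, the remaining argument is a short formal manipulation of the short exact sequence, and I do not anticipate any real obstacle. The one point demanding care is the intermediate case: one must check that when $\depth R/I$ drops strictly below $\depth R/(I:f)$ it cannot stop at some intermediate value but must fall all the way to $\depth R/(I,f)$. This is precisely the content of Lemma \ref{exact}(ii), so the apparent difficulty is dissolved by invoking the correct part of that lemma.
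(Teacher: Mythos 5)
Your proposal is correct and follows essentially the same route as the paper: both arguments combine Lemma \ref{colon}(i) with Lemma \ref{exact}(i)--(ii) applied to the short exact sequence $0 \to R/(I:f) \stackrel{f}{\to} R/I \to R/(I,f) \to 0$, handling the dichotomy in (i) and the comparison in (ii) in exactly the same way. There is nothing to add.
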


\begin{proof} Assume that $\depth R/I \neq \depth R/(I:f)$. Then $\depth R/(I :f) > \depth R/I$ by Lemma \ref{colon}(i).
Consider the short exact sequence
$$0 \To R/(I:f)  \stackrel{f} \To R/I \To R/(I,f) \To 0.$$
By Lemma \ref{exact}(ii), this implies that $\depth R/I = \depth R/(I,f)$.  \par
If $\depth R/(I,f) \ge \depth R/(I:f)$, then $\depth R/I \ge \depth R/(I:f)$ by Lemma \ref{exact}(i).
By Lemma \ref{colon}(i), this implies that $\depth R/I = \depth R/(I:f).$
\end{proof}

Theorem \ref{thm.inclusion}(i) does not necessarily hold if $I$ is not a monomial ideal or if $f$ is not a monomial.

\begin{example} \label{example.nomonomial}
Let $R = \QQ[x,y,z,u,v]$ and consider
$$I = (x^2y^3-z^2u^3, y^2z^3-x^2zuv, x^3u^2-y^2v^3) \text{ and } f = xyzuv.$$
Then $\depth R/I = 2$, $\depth R/(I:f) = 1$ and $\depth R/(I,f) = 0$.
\end{example}

\begin{example} \label{example.nof}
Let $R = \QQ[x,y,z,u,v]$ and consider
$$I = (x^3y, y^2z^5, z^2u^4v) \text{ and } f = xy-zv.$$
Then $\depth R/I = 2$, $\depth R/(I:f) = 1$ and $\depth R/(I,f) = 0$.
\end{example}

From the short exact sequence
$0 \To R/(I:f)  \stackrel{f} \To R/I \To R/(I,f) \To 0,$
we might expect that
$$\reg R/I \in \{\reg R/(I:f)+\deg f,  \reg R/(I,f)\},$$
which would generalize the mentioned result of Dao, Huneke and Schweig \cite[Lemma 2.10]{DHS} to the case where $\deg f > 1$. However, the following example shows that this is not possible, even when $I$ is a monomial ideal and $f$ is a monomial.

\begin{example} \label{example.noregularity}
Let $R = \QQ[x,y,z,u,v]$ and consider
$$I = (xy^2, yz^2, zu^3, uv^2x, v^2xz) \text{ and } f = x^3y.$$
Then $\reg R/I = 6, \reg R/(I,f) = 7$ and $\reg R/(I:f) + \deg f = 8$. In particular,
$\reg R/I \not\in \{\reg R/(I:f) + \deg f, \reg R/(I,f)\}.$
\end{example}

On the other hand, the formula for $\reg R/I$ of Dao, Huneke and Schweig can be made more precise as follows.

\begin{theorem} \label{precise}
Let $I$ be a monomial ideal and $x$ a variable of $R$. Then
\begin{enumerate}[{\indent \em (i)}]
\item $\reg R/I =  \reg R/(I:x)+1$ if $\reg R/(I:x)  > R/(I,x)$,
\item  $\reg R/I \in \{\reg R/(I,x)+1, \reg R/(I,x)\}$ if $\reg R/(I:x) = \reg R/(I,x)$,
\item $\reg R/I = \reg R/(I,x)$ if $\reg R/(I:x) < \reg R/(I,x)$.
\end{enumerate}
\end{theorem}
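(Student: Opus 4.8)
The plan is to extract all three statements from the graded short exact sequence
$$0 \To (R/(I:x))(-1) \stackrel{x}{\To} R/I \To R/(I,x) \To 0,$$
where the source of the first map is shifted because multiplication by the linear form $x$ raises degrees by one; hence the left-hand module has regularity $\reg R/(I:x)+1$. Throughout I would abbreviate $a = \reg R/(I:x)$, $b = \reg R/(I,x)$, and $c = \reg R/I$, so the three terms have regularities $a+1$, $c$, and $b$. By Corollary \ref{DHS}(ii) the dichotomy $c \in \{a+1,\,b\}$ is already available; the only remaining task is to decide which of the two values is taken, according to the sign of $a-b$.

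The additional input I would bring in is the pair of standard regularity comparison estimates attached to the displayed sequence. Using the local cohomology description of $\reg$ from Remark \ref{a-invariant} and chasing the maps $H^{i-1}_\mm(R/(I,x)) \to H^i_\mm\big((R/(I:x))(-1)\big) \to H^i_\mm(R/I)$ and $H^i_\mm(R/I) \to H^i_\mm(R/(I,x)) \to H^{i+1}_\mm\big((R/(I:x))(-1)\big)$ degree by degree, one obtains
$$a+1 \le \max\{c,\,b+1\} \qquad\text{and}\qquad b \le \max\{c,\,a\}.$$
These are the usual inequalities $\reg L \le \max\{\reg M,\reg N+1\}$ and $\reg N \le \max\{\reg M,\reg L-1\}$ for a short exact sequence $0 \to L \to M \to N \to 0$; Lemma \ref{exact}(iii) is the conditional-equality form obtained by combining them, and I would invoke these two bounds in the same spirit.

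With these facts the case analysis is short. In case (i), where $a>b$, I would argue by contradiction: if $c=b$ then the first estimate gives $a+1 \le \max\{b,b+1\}=b+1$, i.e. $a \le b$, which is impossible, so the dichotomy forces $c=a+1$. In case (ii), where $a=b$, there is nothing to prove beyond Corollary \ref{DHS}(ii), since $\{a+1,b\}=\{b+1,b\}$ is exactly the asserted set. In case (iii), where $a<b$, the second estimate gives $\max\{c,a\} \ge b > a$, so the maximum must be attained by $c$, whence $c \ge b$; combined with $c \in \{a+1,b\}$ and the inequality $a+1 \le b$ this yields $c=b$.

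I expect the only genuinely delicate point to be the bookkeeping of the degree shift $(-1)$, which is what converts $\reg R/(I:x)$ into $\reg R/(I:x)+1$ and must be carried consistently through every estimate; after that the argument is purely combinatorial. I would also stress that case (ii) cannot be sharpened: both values $b$ and $b+1$ actually occur, so the two-element conclusion there is best possible, and one should not try to eliminate either alternative — the estimates above deliberately give no lower bound on $c$ in that regime.
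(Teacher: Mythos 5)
Your argument is correct, and it reaches the conclusion by a genuinely different route from the paper at the two places where something beyond the dichotomy of Corollary \ref{DHS}(ii) is needed. The paper settles case (i) by invoking Lemma \ref{colon}(ii), i.e. $\reg R/I \ge \reg R/(I:x)$, whose proof is monomial-specific (Takayama's formula and degree complexes), and settles case (iii) by citing the inequality $\reg R/I \ge \reg R/(I,x)$ from the proof of \cite[Lemma 2.10]{DHS}. You instead extract the two bounds $a+1 \le \max\{c,\,b+1\}$ and $b \le \max\{c,\,a\}$ directly from the long exact sequence of local cohomology attached to $0 \to (R/(I:x))(-1) \to R/I \to R/(I,x) \to 0$; these hold for an arbitrary homogeneous ideal and an arbitrary linear form, so in your write-up the monomial hypothesis enters only through the dichotomy itself. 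What the paper's route buys is brevity, given that Lemma \ref{colon} is already established and used elsewhere; what yours buys is self-containedness and a cleaner separation of the homological content from the combinatorial input --- indeed, adding the equally standard third bound $c \le \max\{a+1,\,b\}$ would dispose of cases (i) and (iii) with no appeal to the dichotomy at all, leaving Corollary \ref{DHS}(ii) responsible only for case (ii). Your handling of the twist $(-1)$, and the observation that in case (iii) the alternative $c=a+1$ collapses to $c=b$ rather than needing to be excluded, are both correct.
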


\begin{proof}
We only need to prove (i) and (iii). \par
If $\reg R/(I:x)  > \reg R/(I,x)$, then $\reg R/I > \reg R/(I,x)$ by Lemma \ref{colon}(ii).
Hence, $\reg R/I = \reg R/(I:x)+1$. \par
If $\reg R/(I:x) < \reg R/(I,x)$, then $\reg R/I > \reg R/(I:x)$ because $\reg R/I \ge \reg R/(I,x)$ by the proof of \cite[Lemma 2.10]{DHS}. Therefore, $\reg R/I = \reg R/(I,x)$.
\end{proof}

From Theorem \ref{precise} we immediately obtain the following estimate, which shows that $\reg I$ is either
$\max\{\reg (I:x), \reg (I,x)\}$ or $\max\{\reg (I:x), \reg (I,x)\} +1$.

\begin{corollary} \label{bounds}
Let $I$ be a monomial ideal and $x$ a variable of $R$. Then
$$\max\{\reg (I:x), \reg (I,x)\}  \le \reg I \le \max\{\reg (I:x)+1, \reg (I,x)\}.$$
\end{corollary}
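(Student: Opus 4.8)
The plan is to deduce Corollary \ref{bounds} directly from Theorem \ref{precise}; the corollary is essentially a repackaging of the trichotomy established there, so no new ideas are required. First I would pass from the ideals to their quotient rings using the identity $\reg J = \reg R/J + 1$ (recorded in the preliminaries), applied to each of $J = I$, $J = I:x$, and $J = (I,x)$. Setting $a = \reg R/(I:x)$, $b = \reg R/(I,x)$, and $c = \reg R/I$, the asserted inequalities become, after subtracting $1$ from every term, the equivalent statement
$$\max\{a, b\} \le c \le \max\{a+1, b\}.$$

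Next I would verify these two inequalities by running through the three cases of Theorem \ref{precise}. In case (i), where $a > b$, one has $c = a+1$, so $\max\{a,b\} = a \le c$ and $c = a+1 = \max\{a+1,b\}$. In case (ii), where $a = b$, one has $c \in \{a, a+1\}$, hence $a \le c \le a+1 = \max\{a+1,b\}$. In case (iii), where $a < b$, one has $c = b$, so $\max\{a,b\} = b = c \le \max\{a+1,b\}$. In every case both the lower and the upper bound hold, which proves the corollary. Alternatively, the lower bound can be obtained without case analysis: $c \ge a$ follows at once from Lemma \ref{colon}(ii), and $c \ge b$ from the inequality $\reg R/I \ge \reg R/(I,x)$ used in the proof of Theorem \ref{precise}(iii).

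The argument is routine, and I do not expect a genuine obstacle. The only point requiring care is the bookkeeping of the degree shift when translating between the regularity of an ideal and that of its quotient ring, together with the observation that the upper bound $\max\{a+1,b\}$ is precisely what the three cases of Theorem \ref{precise} collectively enforce.
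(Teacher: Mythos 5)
Your proposal is correct and follows exactly the route the paper intends: the paper presents Corollary \ref{bounds} as an immediate consequence of Theorem \ref{precise}, and your three-case check (together with the shift $\reg J = \reg R/J + 1$) is just that deduction written out in full. No gaps.
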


In particular, we obtain the following inductive estimate for the regularity of monomial ideals.

\begin{proposition} \label{local}
Let $I$ be a monomial ideal. 
If $\reg (I:x) \le r$ for all variables $x$ of $R$, then $\reg I \le r+1$.
\end{proposition}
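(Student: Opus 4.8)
The plan is to induct on the number of variables $n$, with $R = k[x_1, \dots, x_n]$ and the bound $r$ held fixed throughout. For $n = 1$ every monomial ideal is principal and the claim is immediate, so assume $n \ge 2$ and that the proposition holds for polynomial rings in $n-1$ variables. Fix the variable $x_n$. By Corollary \ref{bounds},
$$\reg I \le \max\{\reg(I:x_n) + 1,\ \reg(I,x_n)\}.$$
Since $\reg(I:x_n) \le r$ by hypothesis, the first entry is at most $r+1$, so it suffices to prove that $\reg(I,x_n) \le r+1$.

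Let $\bar R = k[x_1, \dots, x_{n-1}] = R/(x_n)$ and let $\bar I$ denote the image of $I$ in $\bar R$. Then $R/(I,x_n) \cong \bar R/\bar I$ as graded rings, so $\reg(I, x_n) = \reg_{\bar R}\bar I$, and the goal becomes $\reg_{\bar R}\bar I \le r+1$. I would deduce this from the inductive hypothesis applied to $\bar I$ in $\bar R$, which requires checking that $\reg_{\bar R}(\bar I : y) \le r$ for every variable $y \in \{x_1, \dots, x_{n-1}\}$.

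This verification is the technical heart of the argument. First I would record the combinatorial identity $\bar I : y = \overline{I : y}$ for $y \ne x_n$: for a monomial $\mu$ not involving $x_n$ one has $\mu \in \bar I$ if and only if $\mu \in I$, whence $y\mu \in \bar I \iff y\mu \in I \iff \mu \in I : y \iff \mu \in \overline{I:y}$. Passing to quotients gives a graded isomorphism $\bar R/\overline{I:y} \cong R/((I:y), x_n)$, so that $\reg_{\bar R}(\bar I : y) = \reg((I:y), x_n)$. Finally, applying the left-hand inequality of Corollary \ref{bounds} to the monomial ideal $I : y$ and the variable $x_n$ yields $\reg((I:y), x_n) \le \reg(I:y) \le r$, the last step being the hypothesis. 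This confirms $\reg_{\bar R}(\bar I : y) \le r$ for all variables $y$ of $\bar R$.

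With the hypothesis thus transferred to $\bar I$, the inductive hypothesis gives $\reg_{\bar R}\bar I \le r+1$, hence $\reg(I,x_n) \le r+1$, and the displayed bound then forces $\reg I \le r+1$. The main obstacle is the descent step of the previous paragraph—correctly reconciling the colon and reduction operations on monomial ideals and feeding in the inequality $\reg(J,x_n) \le \reg J$—whereas the degenerate cases $\bar I \in \{0,\ \bar R\}$ (arising when every, or no, minimal generator of $I$ is divisible by $x_n$) are handled directly and contribute nothing.
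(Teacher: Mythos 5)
Your proof is correct and follows essentially the same route as the paper's: induct on the number of variables, use the upper bound of Corollary \ref{bounds} to reduce to bounding $\reg(I,x_n)$, and verify the inductive hypothesis for the reduced ideal via the identity $(I,x_n):y=(I:y,x_n)$ together with the lower bound of Corollary \ref{bounds}. Your write-up merely makes explicit the passage to $\bar R = R/(x_n)$ that the paper leaves implicit when it says ``by induction.''
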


\begin{proof}
The case $n = 1$ is trivial. If $n > 1$, we have $(I,x):y = (I:y,x)$ for all variables $x,y$ of $R$.
Therefore, $\reg ((I,x): y) = \reg (I:y,x)  \le \reg (I:y) \le r$
by the lower bound of Corollary \ref{bounds}.
By induction, we have $\reg (I,x) \le r+1$.
Therefore, the upper bound of Corollary \ref{bounds} implies $\reg I \le r+1$.
\end{proof}

Using Proposition \ref{local}, we can easily recover the following result.

\begin{corollary} \label{HT} \cite[Theorem 3.1]{HT}
Let $I$ be a monomial ideal. Let $\lcm(I)$ denote the least common multiple of the minimal monomial generators of $I$.
Then
$$\reg I \le \deg \lcm(I)- \height I+1.$$
\end{corollary}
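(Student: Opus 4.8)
Looking at the statement to prove:

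$$\reg I \le \deg \lcm(I) - \height I + 1.$$The plan is to prove this by induction on $\deg\lcm(I)$, using the inductive estimate of Proposition \ref{local} as the engine. Concretely, it suffices to show $\reg (I:x) \le \deg\lcm(I) - \height I$ for \emph{every} variable $x$ of $R$; Proposition \ref{local} then delivers $\reg I \le \deg\lcm(I) - \height I + 1$ at once.

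First I would make a harmless reduction: replace $R$ by the subring generated by the variables actually occurring in the minimal generators of $I$. This alters neither $\reg I$ (regularity is stable under polynomial extension) nor $\height I$ nor $\lcm(I)$, and it guarantees that every variable $x$ of $R$ divides $\lcm(I)$ --- which is precisely what forces the induction parameter to drop. In particular, the maximal ideal $(x_1,\dots,x_n)$ now contains $I$, so $\height I \le n \le \deg\lcm(I)$; hence $r := \deg\lcm(I) - \height I \ge 0$, a fact I will need for the degenerate colons below.

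Next come the two monotonicity facts under colon. Since $I \subseteq (I:x)$, every minimal prime of $I:x$ contains a minimal prime of $I$, whence $\height(I:x) \ge \height I$. And since $x \mid \lcm(I)$, a direct exponent computation on the generators $m:x = m/\gcd(m,x)$ shows that passing from $I$ to $I:x$ lowers the $x$-exponent of the lcm by exactly one and leaves the others unchanged, giving $\deg\lcm(I:x) \le \deg\lcm(I) - 1$. With these in hand I run the inductive step variable by variable. If $x \in I$, then $I:x = R$ and $\reg(I:x) = 0 \le r$ by the inequality $r \ge 0$ just established. Otherwise $I:x$ is a proper nonzero monomial ideal with $\deg\lcm(I:x) < \deg\lcm(I)$, so the induction hypothesis applies, and combined with the two monotonicity facts it yields
\[
\reg(I:x) \le \deg\lcm(I:x) - \height(I:x) + 1 \le (\deg\lcm(I)-1) - \height I + 1 = r.
\]
Thus $\reg(I:x) \le r$ for all variables $x$, and Proposition \ref{local} concludes $\reg I \le r+1 = \deg\lcm(I) - \height I + 1$.

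The substance here is light; the main obstacle is purely the bookkeeping that makes the induction legitimate. One must restrict to the variables occurring in $I$ so that each colon strictly decreases $\deg\lcm(I)$ (for variables outside the support one has $I:x = I$, and the induction would stall), and one must dispatch the degenerate colons $I:x = R$ separately, where the induction hypothesis is vacuous but the crude bound $\reg R = 0 \le r$ suffices. No separate base case is needed: the grounding of the induction is exactly the direct treatment of the cases $x \in I$.
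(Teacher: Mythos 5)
Your proof is correct and follows essentially the same route as the paper's: reduce to the case where every variable occurs in the generators, observe $\deg \lcm(I:x) \le \deg\lcm(I)-1$ and $\height(I:x)\ge\height I$, apply induction, and conclude via Proposition \ref{local}. Your explicit handling of the degenerate colons $I:x=R$ and the verification that $r\ge 0$ are careful touches the paper leaves implicit, but the argument is the same.
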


\begin{proof}
The case $\deg \lcm(I) = 1$ is trivial.
Without restriction we may assume that every variable $x$ appears in the minimal monomial generators of $I$.
Then $\deg \lcm(I:x) \le \deg \lcm(I) - 1$ and $\height (I:x) \ge \height I$.
By induction, we have
$$\reg (I:x) \le \deg \lcm(I:x) - \height (I:x) +1\le \deg \lcm(I) - \height I.$$
Therefore, the conclusion follows from Proposition \ref{local}.
\end{proof}


\section{Depth and regularity functions} \label{sec.nonincreasing}

In this section, we investigate the depth and the regularity of powers of an ideal. Our approach allows us to study the non-increasing and non-decreasing properties of the depth and the regularity functions for particular classes of ideals.

Let $I$ be a homogeneous ideal in a polynomial ring $R$.
It is usually difficult to compute $\depth R/I^t$ and $\reg R/I^t$ for all $t \ge 1$.
As a consequence, the behavior of $\depth R/I^t$ and $\reg R/I^t$, as functions in $t$, often remains mysterious.
We only know that for $t$ sufficiently large, $\depth R/I^t$ and $\reg R/I^t$ are constant or a linear function, respectively
\cite{Br,CHT,Ko}.

Random examples show that $\depth R/I^t$ (respectively, $\reg R/I^t$) tends to be a non-increasing (respectively, non-decreasing) function, though that is not the case in general \cite{BHH, Be, HNTT,HTT}.
Bandari, Herzog and Hibi  \cite{BHH} asked whether $\depth R/I^t$ is a non-increasing function for squarefree monomial ideals.
A counter-example was recently found by combinatorial methods in \cite{KSS} (see also \cite{HS}).
However, it is still an open question whether $\depth R/I^t$ is a non-increasing function for the edge ideal of an arbitrary graph. 

Recall that a {\em hypergraph} is a collection of sets, called edges, which are subsets of a vertex set.  
Given a hypergraph $\H$ on the vertices $x_1,...,x_n$, the \emph{edge ideal} of $\H$ is defined to be
$$I(\H) := \big\langle \prod_{x \in F}x |\ F\ \text{is an edge of}\ \H \big\rangle.$$
We will use Lemma \ref{colon}(i) to present a large class of edge ideals of hypergraphs for which $\depth R/I^t$ 
(respectively, $\reg R/I^t$) is a non-increasing (respectively, non-decreasing) function. \par

An edge $F$ of a hypergraph $\H$ is called a \emph{good leaf} if the intersections of $F$ with other edges of $\H$ form a chain with respect to containment. This notion is originally introduced for simplicial complexes \cite{HHTZ}. For the study of edge ideals, one may always assume that there are no containments among the edges of the hypergraph, which can be considered as the facets of  a simplicial complex.
The leaves of a graph are always good. Good leaves do exist in the hypergraph of the facets of a simplicial forest \cite[Corollary 3.4]{HHTZ}, which may be defined as a hyperforest \cite[Theorem 3.2]{HHTZ}.

\begin{theorem} \label{thm.leaf}
Let $I$ be the edge ideal of a hypergraph which contains a good leaf. Then
\begin{enumerate}[{\indent \em (i)}]
\item the functions $\depth R/I^t$ and $\depth R/\overline{I^t}$ are non-increasing for all $t \ge 1$,
\item the functions $\reg R/I^t$ and $\reg R/\overline{I^t}$ are non-decreasing for all $t \ge 1$.
\end{enumerate}
\end{theorem}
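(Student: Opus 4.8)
The plan is to reduce both monotonicity statements to a single colon identity and then invoke Lemma~\ref{colon}. Let $F$ be a good leaf of the hypergraph and let $f=\prod_{x\in F}x$ be its edge monomial, so that $f\in I$. The essential input is the pair of identities
\[
I^{t+1}:f=I^{t}\qquad\text{and}\qquad \overline{I^{t+1}}:f=\overline{I^{t}}\qquad(t\ge 1).
\]
Granting these, the theorem is immediate. Since $f$ is a monomial and $\overline{I^{t+1}}$ is again a monomial ideal, Lemma~\ref{colon}(i) gives $\depth R/I^{t+1}\le \depth R/(I^{t+1}:f)=\depth R/I^{t}$, so $\depth R/I^{t}$ is non-increasing, and the same computation with $\overline{I^{t}}$ in place of $I^{t}$ settles the integral-closure case. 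Dually, Lemma~\ref{colon}(ii) yields $\reg R/I^{t+1}\ge\reg R/(I^{t+1}:f)=\reg R/I^{t}$, and likewise for $\overline{I^{t}}$, giving the regularity statements. Thus everything rests on the two colon identities, which is where the good-leaf hypothesis must enter.

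For the first identity the inclusion $I^{t}\subseteq I^{t+1}:f$ is clear since $f\in I$. For the reverse inclusion I would take a monomial $u$ with $uf\in I^{t+1}$ and show $u\in I^{t}$. Write $uf$ as a multiple of a product $u_{G_1}\cdots u_{G_{t+1}}$ of $t+1$ edge monomials, where $u_E=\prod_{x\in E}x$. If some $G_i$ equals $F$, then cancelling $f$ at the level of exponent vectors shows that $u$ is divisible by the product of the remaining $t$ edge monomials, so $u\in I^{t}$. Otherwise every $G_i$ is a non-leaf edge, and here the good-leaf condition is used: the sets $F\cap G_1,\dots,F\cap G_{t+1}$ lie in a chain, so after reindexing $F\cap G_i\subseteq F\cap G_{t+1}$ for all $i$. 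Comparing exponents in $u_{G_1}\cdots u_{G_{t+1}}\mid uf$ variable by variable, a variable $x\notin F$ already forces $|\{i:x\in G_i\}|\le\deg_x(u)$, while for $x\in F$ one only obtains $|\{i:x\in G_i\}|\le\deg_x(u)+1$. Call $x\in F$ \emph{tight} when equality holds. Each tight $x$ lies in some $G_i$, hence in $F\cap G_i\subseteq F\cap G_{t+1}$, so $G_{t+1}$ contains \emph{all} tight variables. Deleting the factor $u_{G_{t+1}}$ therefore lowers the exponent of each tight variable by exactly one and leaves every other variable admissible, which shows $u_{G_1}\cdots u_{G_{t}}\mid u$ and hence $u\in I^{t}$.

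For the integral-closure identity I would first iterate the ordinary one: from $I^{m+1}:f=I^{m}$ one gets $I^{m+k}:f^{k}=I^{m}$ for all $m,k\ge 1$ by an easy induction. Now suppose $x^{\a}f\in\overline{I^{t+1}}$. Since $I^{t+1}$ is a monomial ideal, $(x^{\a}f)^{k}\in I^{(t+1)k}$ for some $k\ge 1$, i.e.\ $x^{k\a}f^{k}\in I^{tk+k}$; applying $I^{tk+k}:f^{k}=I^{tk}$ gives $x^{k\a}\in I^{tk}\subseteq\overline{I^{tk}}=\overline{(I^{t})^{k}}$. By the Newton-polyhedron description of the integral closure of a monomial ideal, $x^{k\a}\in\overline{(I^{t})^{k}}$ is equivalent to $\a$ lying in the Newton polyhedron of $I^{t}$, that is, to $x^{\a}\in\overline{I^{t}}$, which proves $\overline{I^{t+1}}:f\subseteq\overline{I^{t}}$. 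The reverse inclusion follows from $f\in\overline{I}$ together with $\overline{I}\cdot\overline{I^{t}}\subseteq\overline{I^{t+1}}$.

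I expect the main obstacle to be the combinatorial peeling step in the second paragraph: making precise, at the level of exponent vectors, that the maximal member of the chain $F\cap G_i$ simultaneously absorbs every tight variable, so that exactly one factor can always be removed. Everything else—the reduction via Lemma~\ref{colon} and the passage to the integral closure through powers—is formal once the identity $I^{t+1}:f=I^{t}$ is in hand.
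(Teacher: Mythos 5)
Your proposal is correct and follows essentially the same route as the paper: both rest on the identities $I^{t+1}:f=I^t$ and $\overline{I^{t+1}}:f=\overline{I^t}$ proved via the good-leaf chain condition (removing the edge whose intersection with $F$ is maximal), followed by an appeal to Lemma~\ref{colon}. Your exponent-counting with ``tight'' variables is just a more explicit bookkeeping of the paper's observation that the variables of $F$ outside the maximal intersection must come from the cofactor, so the step you flagged as a potential obstacle is in fact already complete.
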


\begin{proof}
	Let $F$ be a good leaf and $f$ the monomial associated with $F$. Let $g$ be an arbitrary monomial of 
	$I^{t+1} :f$, $t >0$. Then $fg = hf_1 \cdots f_{t+1}$, where $h$ is a monomial and $f_1,...,f_{t+1}$ are monomials associated with some edges $F_1,...,F_{t+1}$ of the hypergraph.
	By the definition of a good leaf, we may assume that $F_i \cap F \subseteq F_1 \cap F$ for all $i = 1,...,t+1$.
	Then the variables of $F \setminus F_1$ do not appear in $f_1,...,f_{t+1}$.
	So they appear in $h$. From this it follows that $hf_1$ is divisible by $f$. Therefore, $g$ is divisible by $f_2 \cdots f_{t+1}$. That means $g \in I^t$. Since $I^t \subseteq I^{t+1}:f$,  we have shown that $I^{t+1}: f = I^t$. 
	Hence, $\depth R/I^t \ge \depth R/I^{t+1}$ and $\reg R/I^t \le \reg R/I^{t+1}$ by Lemma \ref{colon}.
	
	Let $g \in \overline{I^{t+1}}: f$. Then $(gf)^m \in I^{m(t+1)}$ for some $m > 0$.
	Similarly as above, we can show that $g^m \in I^{mt}$. Hence $g \in \overline{I^t}$.
	From this it follows that $\overline{I^{t+1}}: f = \overline{I^t}$.
	Since $\overline{I^t}$ is also a monomial ideal, $\depth R/\overline{I^t} \ge \depth R/\overline{I^{t+1}}$ and $\reg R/\overline{I^t} \le \reg R/\overline{I^{t+1}}$ by Lemma \ref{colon}.
\end{proof}

In the following we give formulas for $\depth R/(I,f)^s$ and $\reg R/(I,f)^s$, when $f$ is a non-zerodivisor of $R/I^t$ for all $t \le s$. The proof for these formulas follows the same line of arguments as that of Theorem \ref{rmk.GLF}.

\begin{theorem} \label{thm.RE}
Let $R$ be a positively graded algebra over a field, and $I$ be a graded ideal in $R$.
Let $f$ be a form in $R$ which is a non-zerodivisor of $R/I^t$ for all $t \le s$. Then
\begin{enumerate}[{\indent \em (i)}]
\item $\depth R/(I,f)^s  = \min_{t \le s}   \depth R/I^t -1,$
\item $\reg R/(I,f)^s  - s\deg f= \max_{t \le s} \big\{ \reg R/I^t -t\deg f \big\}$.
\end{enumerate}
\end{theorem}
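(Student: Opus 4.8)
The plan is to imitate the proof of Theorem \ref{rmk.GLF}: pass to a polynomial extension in which $(I,f)^s$ becomes a quotient by a single non-zerodivisor, and then read off depth and regularity from the $x$-strands. Set $d=\deg f$, let $S=R[x]$ with $x$ a new variable of degree $d$, and put $Q=(I,x)^s=\sum_{j=0}^{s}x^jI^{s-j}$. The substitution $x\mapsto f$ carries $Q$ onto $(I,f)^s$ and identifies $S/(x-f)$ with $R$, so that $R/(I,f)^s\cong S/(Q,x-f)$ as graded rings. The whole argument then takes place in $S$.

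The crux is to show that $x-f$ is a non-zerodivisor on $S/Q$; this is the one place where the hypothesis on $f$ is consumed, and I expect it to be the delicate part of the write-up. Since $Q$ is homogeneous for the $x$-grading, I would first record its strands: collecting coefficients of $x^k$ gives $Q=\bigoplus_k Q_kx^k$ with $Q_k=I^{\max(s-k,0)}$. If $(x-f)\sum_k r_kx^k\in Q$, comparing $x$-strands yields $-fr_0\in I^s$ and $r_{k-1}-fr_k\in I^{s-k}$ for $1\le k\le s-1$. Feeding the assumption that $f$ is a non-zerodivisor on each $R/I^{s-k}$ into these relations inductively, starting from the bottom strand, forces $r_0\in I^s$, then $r_1\in I^{s-1}$, and so on, so that every $r_k\in I^{s-k}=Q_k$ and the element lies in $Q$. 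This is exactly why the hypothesis is needed for all $t\le s$.

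With $x-f$ a non-zerodivisor of degree $d$ on $S/Q$ one gets $\depth R/(I,f)^s=\depth S/Q-1$ and $\reg R/(I,f)^s=\reg S/Q+(d-1)$; and since $x$ is a non-zerodivisor on $Q\subseteq S$, the sequences $0\to Q\to S\to S/Q\to 0$ and $0\to Q(-d)\xrightarrow{x}Q\to Q/xQ\to 0$ collapse both invariants onto $Q/xQ$, giving $\depth R/(I,f)^s=\depth Q/xQ-1$ and $\reg R/(I,f)^s=\reg Q/xQ-1$. The payoff of this reduction is that $Q/xQ$ is annihilated by $x$, hence is a graded $R$-module, and it splits along $x$-strands as
\[
Q/xQ\;\cong\;I^s\,\oplus\,\bigoplus_{t=0}^{s-1}\bigl(I^t/I^{t+1}\bigr)\bigl(-(s-t)d\bigr),
\]
so that $\depth Q/xQ$ is the minimum, and $\reg Q/xQ$ the maximum, of the invariants of the summands.

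To finish I would evaluate $\depth$ and $\reg$ of each strand from the sequences $0\to I^t/I^{t+1}\to R/I^{t+1}\to R/I^t\to 0$ and $0\to I^{t+1}\to I^t\to I^t/I^{t+1}\to 0$ using Lemma \ref{exact}, obtaining $\depth(I^t/I^{t+1})=\min\{\depth R/I^{t+1},\depth R/I^t+1\}$ and $\reg(I^t/I^{t+1})=\max\{\reg R/I^{t+1},\reg R/I^t+1\}$, together with $\depth I^s=\depth R/I^s+1$. Minimizing over the depth summands collapses to $\min_{t\le s}\depth R/I^t$: the lower bound is immediate, and choosing the least $t_0$ with $\depth R/I^{t_0}$ minimal makes the $(t_0{-}1)$-strand attain it, forcing equality; this gives (i). For the regularity formula (ii) one inserts the shifts $(s-t)d$ and maximizes; the key observation is that a maximizer $t^*$ must satisfy $\reg R/I^{t^*}>\reg R/I^{t^*-1}$, since otherwise the $(t^*{-}1)$-strand would give a strictly larger value because of the extra $\deg f\ge 1$ in its shift, and this sharpness makes the strand bound exact, yielding (ii) after tracking the degree shifts by $\deg f$. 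I expect the non-zerodivisor claim of the second paragraph to be the only genuine obstacle; everything after it is bookkeeping with Lemma \ref{exact} and the $x$-strand shifts.
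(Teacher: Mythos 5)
Your construction of $S=R[x]$, $Q=(I,x)^s$, and your verification that $x-f$ is a non-zerodivisor on $S/Q$ by comparing $x$-strands is exactly the paper's argument, and that part is fine. The problems lie in how you extract the two invariants afterwards. First, the detour through $Q$ and $Q/xQ$ is both unnecessary and leaky: the relations $\depth(I^t/I^{t+1})=\min\{\depth R/I^{t+1},\depth R/I^t+1\}$ and $\reg(I^t/I^{t+1})=\max\{\reg R/I^{t+1},\reg R/I^t+1\}$ that you read off from $0\to I^t/I^{t+1}\to R/I^{t+1}\to R/I^t\to 0$ are only one-sided inequalities in general (Lemma \ref{exact} never gives both directions), and the ``choose the extremal index'' repairs you sketch would have to be run for all strands simultaneously in the regularity case; moreover $\depth S/Q=\depth Q-1$ and $\reg S/Q=\reg Q-1$ are clean over a polynomial ring but need justification over the positively graded $R$ of the hypothesis. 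All of this evaporates if you observe, as the paper does, that $S/Q$ itself is a finitely generated $R$-module (it is killed by $x^s$) and splits along $x$-strands as $\bigoplus_{t=1}^{s}(R/I^t)(-(s-t)d)$; the minimum and maximum over these summands give $\depth S/Q$ and $\reg S/Q$ in one line, with no associated-graded pieces appearing.

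Second, and more seriously for part (ii): your bookkeeping $\reg R/(I,f)^s=\reg S/Q+(d-1)$ is the correct effect of killing a non-zerodivisor of degree $d$, but carried through honestly it lands on $\reg R/(I,f)^s-sd=\max_{t\le s}\big\{\reg R/I^t-td\big\}+(d-1)$, which agrees with the stated formula only when $d=1$. You cannot ``yield (ii) after tracking the degree shifts'' as written; the extra $d-1$ does not cancel. Indeed (ii) as stated fails for $d\ge 2$: take $R=k[y,z]$, $I=(y)$, $f=z^2$, $s=1$; then $\reg R/(y,z^2)-2=-1$ while $\max_{t\le 1}\{\reg R/I^t-2t\}=-2$. (The paper's own proof silently treats the quotient by $x-f$ as regularity-preserving, which is special to degree one.) So you should either restrict (ii) to $\deg f=1$ or prove the corrected formula with the extra $\deg f-1$; part (i) is unaffected, since killing a non-zerodivisor drops depth by exactly one in any degree.
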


\begin{proof}
Let $S = R[x]$ with $\deg x = \deg f = d$. If we consider $(I,x)$ as a graded module over $R$, we have a decomposition
$$(I,x)^s = I^s \oplus I^{s-1}x \oplus \cdots \oplus Ix^{s-1} \oplus Rx^s\oplus Rx^{s+1}\cdots$$
From this it follows that
$$S/(I,x)^s = (R/I^s) \oplus (R/I^{s-1})(-d) \oplus \cdots \oplus (R/I)(-(s-1)d).$$
Thus, we have
\begin{align*}
\depth S/(I,x)^s & = \min_{t \le s}  \depth R/I^t,\\
\reg S/(I,x)^s - sd & = \max_{t \le s} \big\{ \reg R/I^t -td\big\}.
\end{align*}

Since $R/(I,f)^s  \simeq R[x]/((I,x)^s,x-f),$ it remains to show that $x-f$ is a non-zerodivisor of $R[x]/(I,x)^s$. Let $g \in (I,x)^{s-1}$ such that $(x-f)g \in (I,x)^s$. Without restriction we may assume that $g = c_0 + c_1x + \cdots c_{s-1}x^{s-1}$, $c_i \in R$. Then $c_0f \in I^s$. Hence, $c_0 \in I^s:f = I^s$. From this it follows that
$(c_1x + + \cdots c_{s-1}x^{s-1})(x-f) \in (I,x)^s$. This implies $c_1f \in I^{s-1}$. Hence, $c_1 \in I^{s-1}:f = I^{s-1}$.
Proceed in this fashion, we eventually get $c_i \in I^{s-i}$ for $i = 0,...,s-1$. Therefore, $g \in (I,x)^s$, as required.
\end{proof}

Theorem \ref{thm.RE} does not hold if $f \neq 0$ is not a non-zerodivisor of $R/I^t$ for all $t \le s$, as illustrated in the following example.

\begin{example} \label{example.noRE}
Let $R = k[x,y,z,u,v]$ and consider
$$I = (x^5, x^4y, xy^4, y^5, x^2y^2(xz^6-yu^6), x^3y^3) \text{ and } f = z-u.$$
Then $f$ is a non-zerodivisor of $R/I$, but $f$ is a zerodivisor $R/I^2$.

We now have
$\depth R/(I,f)^2 = 1, \depth R/I = 2, \text{ and } \depth R/I^2 = 1.$
Thus,
$$\depth R/(I,f)^2 \not= \min_{1 \le t \le 2} \depth R/I^t - 1.$$

We also have $\reg R/(I,f)^2 = 15, \reg R/I = 10, \text{ and } \reg R/I^2 = 20.$
Therefore,
$$\reg R/(I,f)^2 \not= \max_{1\le t \le 2} \{\reg R/I^t + (s-t)\deg f\}.$$
\end{example}

From Theorem \ref{thm.RE}(i) we immediately obtain the following consequence.

\begin{corollary} \label{increasing}
Let $R$ be a positively graded algebra over a field and $I$ a graded ideal of $R$.
Let $f$ be a form in $R$ which is a non-zerodivisor of $R/I^t$ for all $t \ge 0$.
Then
\begin{enumerate}[{\indent \em (i)}]
\item $\depth R/(I,f)^t$ is a non-increasing function,
\item $\reg R/(I,f)^t - t\deg f$ is a non-decreasing function.
\end{enumerate}
\end{corollary}

As an example, we may let $f$ be a new variable $x$, then $\depth R[x]/(I,x)^t$ is a non-increasing function and
$\reg R[x]/(I,x)^t - t\deg f$ is a non-decreasing function. This displays an interesting phenomenon because the functions $\depth R/I^t$ and $\reg R/I^t$ needs not be so (see \cite[Theorem 0.1]{BHH}, \cite[Theorem 6.7]{HNTT} and \cite[Example 2.8]{HTT}).

\begin{remark}
Theorem \ref{thm.RE}(i) and Corollary \ref{increasing}(i) also hold for ideals in a \emph{local} ring as it can be seen from the proof of Theorem \ref{thm.RE}(i).
\end{remark}

In the following we denote by $\Ass I$ the set of associated primes of $I$ in a Noetherian ring $R$.
Brodmann \cite{Br} showed that $\Ass I^t = \Ass I^{t+1}$ for $t \gg 0$.
In general, we may have $\Ass I^t \not\subseteq \Ass I^{t+1}$ (see e.g. \cite{BHH, HNTT, HS}).
It has been of great interest to know when $\Ass I^t \subseteq \Ass I^{t+1}$ for all $t \ge 1$
(see e.g. \cite{FHV, HQ, MMV}).
\par

The following consequence of Theorem \ref{thm.RE}(i) shows that if there is a non-zerodivisor $f$ modulo $I^t$ for all $t \ge 1$, then $\Ass (I,f)^t \subseteq \Ass (I,f)^{t+1}$ for all $t \ge 1$.

\begin{corollary} \label{ass}
Let $R$ be a Noetherian ring and $I$ an ideal of $R$.
Let $f \in R$ be an element which is a non-zerodivisor of $R/I^t$ for all $t \le s$.
Then $\Ass(I,f)^t \subseteq \Ass (I,f)^{t+1}$ for all $t < s$.
\end{corollary}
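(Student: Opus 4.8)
The plan is to deduce Corollary~\ref{ass} directly from the depth formula in Theorem~\ref{thm.RE}(i) by exploiting the standard characterization of associated primes via depth localized at each prime. The key observation is that membership of a prime $\pp$ in $\Ass N$ is detected by the vanishing of $\depth N_\pp$, so monotonicity of associated primes across powers can be read off from a monotonicity statement about depth. Concretely, for a finitely generated module $N$ over the Noetherian local ring $R_\pp$ one has $\pp \in \Ass N$ if and only if $\depth_{R_\pp} N_\pp = 0$; equivalently, working with $R/N$-type quotients, $\pp \in \Ass(R/(I,f)^t)$ iff the depth of $R_\pp/((I,f)^tR_\pp)$ is zero. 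So the statement $\Ass(I,f)^t \subseteq \Ass(I,f)^{t+1}$ will follow once I know that passing from the $t$-th power to the $(t+1)$-st power cannot turn a zero local depth into a positive one.

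First I would fix a prime $\pp \in \Ass(I,f)^t$ with $t < s$ and localize the entire setup at $\pp$. The hypothesis that $f$ is a non-zerodivisor of $R/I^{t'}$ for all $t' \le s$ is preserved under localization, since localization of a regular element across a quotient remains regular (associated primes only shrink under localization). Thus $f$ is a non-zerodivisor on $R_\pp/(IR_\pp)^{t'}$ for all $t' \le s$, and I may apply the local version of Theorem~\ref{thm.RE}(i) noted in the Remark immediately preceding this corollary. That remark explicitly records that Theorem~\ref{thm.RE}(i) holds over a local ring, so I can use the formula $\depth R_\pp/(I,f)^{t'}R_\pp = \min_{t'' \le t'} \depth R_\pp/(IR_\pp)^{t''} - 1$ for each $t' \le s$.

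Next I would compare the two depth values. Writing $m_{t'} := \min_{t'' \le t'} \depth R_\pp/(IR_\pp)^{t''}$, the local formula gives
\begin{align*}
\depth R_\pp/(I,f)^t R_\pp &= m_t - 1,\\
\depth R_\pp/(I,f)^{t+1} R_\pp &= m_{t+1} - 1.
\end{align*}
Since $m_{t+1} = \min\{m_t, \depth R_\pp/(IR_\pp)^{t+1}\} \le m_t$, the depth modulo the $(t+1)$-st power is at most the depth modulo the $t$-th power. Now $\pp \in \Ass(I,f)^t$ means the left-hand depth is $0$, i.e.\ $m_t = 1$; hence $m_{t+1} \le 1$, and because a minimum of depths is nonnegative we must also account for the possibility $m_{t+1}=1$, giving $\depth R_\pp/(I,f)^{t+1}R_\pp = m_{t+1}-1 \le 0$, which forces it to equal $0$. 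Therefore $\pp \in \Ass(I,f)^{t+1}$, as desired.

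I expect the main obstacle to be bookkeeping rather than conceptual: one must check carefully that all three ingredients survive localization at $\pp$ — namely that $(I,f)^t$ localizes to $(IR_\pp,f)^t$, that $f$ stays a non-zerodivisor on each relevant power, and that ``$\pp$ associated'' is faithfully translated into ``local depth zero.'' A subtler point is that Theorem~\ref{thm.RE} is stated for \emph{graded} ideals in a positively graded algebra, whereas here $R$ is an arbitrary Noetherian ring; the preceding Remark asserts the local analogue of part~(i), so I would lean on that remark to justify applying the depth formula in the local, ungraded setting, and would double-check that its proof (the non-zerodivisor argument on $x-f$, which is purely module-theoretic) indeed goes through without the grading hypothesis.
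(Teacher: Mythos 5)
Your proposal is correct and follows essentially the same route as the paper: localize at the prime, invoke the local version of Theorem~\ref{thm.RE}(i) noted in the preceding Remark, and use the monotonicity of $\min_{t''\le t}\depth R_\pp/(IR_\pp)^{t''}$ together with the characterization of associated primes via local depth zero. The bookkeeping points you flag (localization preserving the non-zerodivisor hypothesis and the powers of $(I,f)$) are routine and handled implicitly in the paper as well.
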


\begin{proof}
Let $P \in \Ass (I,f)^t$, $t <s$. Then $\depth (R/(I,f)^t)_P = 0$.
By the local version of Theorem \ref{thm.RE}(i), we have $\depth (R/I^j)_P = 1$ for some $j \le t$, and hence
$$\depth (R/(I,f)^{t+1})_P = \min_{j \le t+1} \depth (R/I^j)_P - 1 = 0.$$
This implies that $P \in \Ass (I,f)^{t+1}$, as required.
\end{proof}


\section{Depth and regularity of edge ideals}

The aim of this section is to show that our approach can be used to study edge ideals of hypergraphs.
Throughout this section we assume that all hypergraphs are simple and have no isolated vertices. \par

We will first investigate very well-covered graphs.
Recall that a graph is {\em well-covered} if all minimal vertex covers have the same size. A graph is {\em very well-covered} if all minimal vertex covers contain exactly half of the vertices. In these definitions we can replace minimal vertex covers by maximal independent sets because they are complements of one another. \par

It is well known that every associated prime of the edge ideal of a graph is generated by the elements of a minimal vertex cover of the graph, and that this gives a correspondence between these two notions. Therefore, the edge ideal a graph is (height) unmixed if and only if the graph is well-covered. An unmixed edge ideal of height $n$ in a polynomial ring of $2n$ variables must come from a very well-covered graph. The interest in edge ideals of very well-covered graphs originates from bipartite graphs because an unmixed bipartite graph must be very well-covered.
\par

A graph $\H$ is called {\em Cohen-Macaulay} if $R/I(\H)$ is a Cohen-Macaulay ring, i.e., $\depth R/I(\H) = \dim R/I(\H)$.
It is well known that the edge ideal of a Cohen-Macaulay graph is always unmixed.
Herzog and Hibi \cite[Theorem 3.4]{HH2} first gave a characterization of Cohen-Macaulay bipartite graphs. This result
was extended to very well-covered graphs by Crupi, Rinaldo and Terai \cite{CRT} (see also Constantinescu and Varbaro \cite{CV} and Mahmoudi et al \cite{Ma}).
We will give a short proof of their results. Our proof is based on the following effective characterizations of very-well covered graphs, due to Favaron \cite{Fa}.

\begin{lemma} \label{covered} \cite[Theorem 1.2]{Fa}
A graph $\H$ is very well-covered if and only if it has a perfect matching $M$ with the following two properties:
\begin{enumerate}[{\indent \rm (i)}]
\item no edge of $M$ belongs to a triangle in $\H$,
\item if an edge of $M$ is the central edge of a path of length 3 in $\H$, then the two vertices at the ends of the path must be adjacent.
\end{enumerate}
Moreover, if $\H$ is very well covered, then every perfect matching in $\H$ satisfies these properties.
\end{lemma}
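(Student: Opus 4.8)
The plan is to base everything on a single bookkeeping observation: since $M$ is a perfect matching with $n$ edges while $\H$ has $2n$ vertices, any independent set contains at most one endpoint of each edge of $M$, so $\alpha(\H) \le n$; dually, a maximal independent set has size exactly $n$ if and only if it contains exactly one endpoint of every edge of $M$. Both implications will be read off from this.

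For the direction ``$\Leftarrow$'' I would argue directly. Assume $M$ satisfies (i) and (ii), let $S$ be any maximal independent set, and reduce ``$|S|=n$'' to ``$S$ meets every edge of $M$''. Suppose instead that $S$ contains neither endpoint of some $e=\{a,b\}\in M$. By maximality, $a$ has a neighbour $a'\in S$ and $b$ has a neighbour $b'\in S$, with $a',b'\notin\{a,b\}$. If $a'=b'$, then $a',a,b$ form a triangle containing $e$, contradicting (i). If $a'\neq b'$, then $a'-a-b-b'$ is a path of length $3$ whose central edge is $e$, so (ii) forces $a'b'\in E(\H)$; but $a',b'\in S$ and $S$ is independent, a contradiction. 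Hence every maximal independent set has size $n$, i.e. $\H$ is very well-covered.

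For ``$\Rightarrow$'' together with the final ``moreover'' clause, I would first record an auxiliary fact: in a very well-covered graph every vertex lies in a maximum independent set. Indeed, for a vertex $v$ let $T$ be a maximal independent set of $\H - N[v]$; then $\{v\}\cup T$ is a maximal independent set of $\H$, hence has size $n$. Granting that a very well-covered graph admits a perfect matching, let $M$ be an \emph{arbitrary} perfect matching and verify (i) and (ii). For (i), if $e=\{a,b\}\in M$ lay in a triangle with apex $c$, pick a maximum independent set $S\ni c$; as $c$ is adjacent to both $a$ and $b$, neither lies in $S$, contradicting that $S$ meets $e$ in exactly one vertex. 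For (ii), if $e=\{a,b\}$ is the central edge of a path $c-a-b-d$ with $cd\notin E(\H)$, then $\{c,d\}$ is independent and extends to a maximal independent set $S$; since $c$ is adjacent to $a$ and $d$ to $b$, we get $a,b\notin S$, again contradicting that $S$ meets $e$. This shows \emph{every} perfect matching satisfies (i) and (ii), which yields both the existence claim and the ``moreover'' statement.

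The main obstacle is the one step assumed above: that a very well-covered graph possesses a perfect matching at all. I expect to prove this through matching theory, e.g. the Berge--Tutte formula or the Gallai--Edmonds decomposition. If $\H$ had no perfect matching there would be a set $U$ with more than $|U|$ odd components in $\H-U$; choosing one vertex from each such component gives an independent set, since there are no edges of $\H$ between distinct components of $\H-U$, and the aim is to convert this deficiency, using the full \emph{well-covered} hypothesis that every maximal independent set has size $n$ rather than merely the equality $|V(\H)|=2\alpha(\H)$, into a maximal independent set of size different from $n$, a contradiction. Making this counting precise is the delicate point; once a perfect matching is in hand, the remaining arguments are immediate from the opening observation.
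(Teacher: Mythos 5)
The paper does not prove this lemma at all: it is quoted verbatim from Favaron \cite[Theorem 1.2]{Fa}, so there is no in-paper argument to compare against. Judged on its own terms, your proposal is correct and complete in the ``if'' direction (the case analysis $a'=b'$ versus $a'\neq b'$ cleanly reduces to (i) and (ii), and the bookkeeping observation that an independent set of size $n$ must meet each of the $n$ matching edges exactly once is exactly the right lever), and it is also complete for the ``moreover'' clause, i.e.\ for showing that \emph{once a perfect matching exists}, every perfect matching of a very well-covered graph satisfies (i) and (ii).

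The genuine gap is the step you yourself flag: the ``only if'' direction needs the existence of \emph{some} perfect matching in a very well-covered graph, and this is not a routine deduction --- it is itself a substantive theorem (due to Favaron and, independently, Staples), and your Berge--Tutte sketch does not close it. Concretely: if $o(\H-U)>|U|$, picking one vertex from each odd component does give an independent set $I_0$ of size $o(\H-U)$, but to contradict very-well-coveredness you must extend $I_0$ to a \emph{maximal} independent set and certify that its size differs from $n$; the extension can absorb arbitrarily many further vertices, so the deficiency $o(\H-U)-|U|$ does not transfer to the size of the maximal extension without additional structural input. (The natural alternative, running Hall's condition between a maximum independent set $S$ and its complement $T$, runs into the same difficulty: a deficient set $A$ yields an independent set $(S\setminus N(A))\cup A'$ whose maximal extension is again hard to size.) Until this existence statement is proved, the forward implication of the lemma is unestablished; everything else in your argument is sound.
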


Recall that two vertices $x, y$ of a graph are {\em twin} if they have the same (open) neighborhood.
A twin-free graph is sometimes referred to as \emph{irreducible} by authors in graph theory.

\begin{lemma} \label{twin} \cite[Theorem 2.5]{Fa}
A very well-covered graph $\H$ is twin-free if and only if it has a perfect matching $M$ such that there is no cycle $C_4$ in $\H$ with two edges of $M$.
\end{lemma}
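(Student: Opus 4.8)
The plan is to prove both implications using the structural characterization of very well-covered graphs from Lemma \ref{covered}, which guarantees that $\H$ has a perfect matching and, by its ``moreover'' clause, that \emph{every} perfect matching $M$ of $\H$ satisfies: (i) no edge of $M$ lies in a triangle, and (ii) whenever an edge of $M$ is the central edge of a path of length $3$, the two endpoints of that path are adjacent. The idea is that (ii) is exactly the tool that converts a bad $C_4$ into an equality of neighborhoods.

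For the reverse implication ($\Leftarrow$) I would argue by contraposition, producing from any twin pair a $C_4$ using two matching edges. Suppose $x \neq y$ with $N(x) = N(y)$. Since $x \notin N(x) = N(y)$, the vertices $x,y$ are non-adjacent. Let $xx'$ and $yy'$ be the $M$-edges at $x$ and $y$; as $M$ is a matching and $x \neq y$ we have $x' \neq y'$. Because $x' \in N(x) = N(y)$ and $y' \in N(y) = N(x)$, the edges $yx'$ and $xy'$ are present, so $x - x' - y - y' - x$ is a $4$-cycle whose two opposite edges $xx', yy'$ lie in $M$. (If one insists the $C_4$ be induced, property (i) forbids the chord $x'y'$: together with $x \sim x'$ and $x \sim y'$ it would place a triangle on the matching edge $xx'$.) This contradicts the hypothesis that some perfect matching has no such $C_4$, so $\H$ is twin-free.

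For the forward implication ($\Rightarrow$) I would show conversely that when $\H$ is twin-free \emph{every} perfect matching already avoids such a $C_4$. Fix a perfect matching $M$ and suppose toward a contradiction that it has a $C_4$ $a - b - c - d - a$ with $ab, cd \in M$. Property (i) forces this $C_4$ to be induced, since a chord $ac$ or $bd$ would create a triangle on the matching edge $ab$ or $cd$; in particular $a \not\sim c$. I then claim $a$ and $c$ are twins. For any $v \in N(a) \setminus \{b\}$ the path $v - a - b - c$ has length $3$ with central edge $ab \in M$, and since $a \not\sim c$ forces $v \neq c$ this is a genuine path on four vertices, so (ii) yields $v \sim c$; combined with $b \in N(c)$ this gives $N(a) \subseteq N(c)$. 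The symmetric argument along $w - c - d - a$, with central edge $cd \in M$, gives $N(c) \subseteq N(a)$. Hence $N(a) = N(c)$, contradicting twin-freeness.

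The main obstacle is this forward direction: recognizing that the two non-adjacent ``diagonal'' vertices $a,c$ of the bad $C_4$ form the twin pair, and extracting $N(a) = N(c)$ from Favaron's condition (ii). The delicate points are verifying that each invoked path has four distinct vertices (which is precisely where (i), guaranteeing the $C_4$ is induced, is needed to rule out $v = c$) and noting that the apparent exceptional neighbor $b$ is harmless because $b \in N(c)$. Everything else is routine bookkeeping with the two matching edges.
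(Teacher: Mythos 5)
Your argument is correct, but note that the paper itself gives no proof of this lemma: it is quoted verbatim from Favaron as \cite[Theorem 2.5]{Fa}, so there is nothing in the text to compare against. What you have done is rederive Favaron's theorem from the other quoted result, Lemma \ref{covered}, and the derivation holds up. The backward direction is clean: a twin pair $x,y$ is necessarily non-adjacent, the four vertices $x,x',y,y'$ are distinct because $M$ is a matching, and $N(x)=N(y)$ supplies the two cross edges, producing a $C_4$ on two matching edges (with induced-ness, if needed, secured by the triangle-free condition (i)). The forward direction is the substantive part, and you identify the right mechanism: condition (i) forces the offending $C_4$ to be induced, which is exactly what guarantees that each path $v$--$a$--$b$--$c$ (respectively $w$--$c$--$d$--$a$) has four distinct vertices, so condition (ii) applies and yields $N(a)=N(c)$, contradicting twin-freeness. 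Your version in fact proves the slightly stronger statement that for a twin-free very well-covered graph \emph{every} perfect matching avoids such a $C_4$, which parallels the ``moreover'' clause of Lemma \ref{covered} and is the form actually convenient in the proof of Theorem \ref{CM}. The only payoff of the paper's choice is brevity; yours makes the exposition self-contained at the cost of half a page.
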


Our result on the Cohen-Macaulayness of very well-covered graphs is formulated in a different way than those of Constantinescu and Varbaro \cite[Theorem 2.3]{CV}, Crupi, Rinaldo and Terai \cite[Theorem 3.4]{CRT} and Mahmoudi et al \cite[Theorem 3.2]{Ma}.

\begin{theorem} \label{CM}
Let $\H$ be a simple graph on $2n$ vertices which has a minimal vertex cover of $n$ elements.
Then $\H$ is a Cohen-Macaulay graph if and only if $\H$ is a twin-free very well-covered graph $\H$.
\end{theorem}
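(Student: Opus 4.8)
The plan is to prove both implications by induction on $n$, using the depth recursion of Corollary \ref{DHS}(i) as the engine and Favaron's matching descriptions (Lemmas \ref{covered} and \ref{twin}) to control the combinatorics. The first move, common to both directions, is to reduce to the very well-covered case. If $R/I(\H)$ is Cohen--Macaulay then $I(\H)$ is unmixed, so all minimal vertex covers have the same size; since one of them has $n$ elements, every minimal vertex cover has exactly half of the $2n$ vertices, i.e.\ $\H$ is very well-covered. Conversely a very well-covered graph has this property by definition. In either case $\height I(\H)=n$, hence $\dim R/I(\H)=n$, and the theorem reduces to the assertion that, \emph{among very well-covered graphs}, $R/I(\H)$ is Cohen--Macaulay if and only if $\H$ is twin-free. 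Throughout I fix a perfect matching $M$ of $\H$, which exists by Lemma \ref{covered}.

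For ``twin-free $\Rightarrow$ Cohen--Macaulay'' I would first locate a vertex $u$ of degree one with unique neighbor $v$; such a vertex exists in every twin-free very well-covered graph, and extracting it from Favaron's conditions (i)--(ii) is the combinatorial core of this direction. Recursing on the variable $v$, one has $R/(I(\H),v)\cong k[u]\otimes k[V\setminus\{u,v\}]/I(\H\setminus\{u,v\})$, since deleting $v$ isolates the pendant $u$, and $R/(I(\H):v)\cong k[v]\otimes k[V\setminus N[v]]/I(\H\setminus N[v])$, where $N[v]$ is the closed neighborhood of $v$. Both $\H\setminus\{u,v\}$ and $\H\setminus N[v]$ inherit a perfect matching from $M$ and, being induced subgraphs, inherit Favaron's conditions, so they are again very well-covered; granting that twin-freeness also survives these deletions, the induction hypothesis makes each quotient Cohen--Macaulay of the expected dimension, the free variables $u,v$ contribute $+1$ to the depth, and one obtains $\depth R/(I(\H),v)\ge n$ and $\depth R/(I(\H):v)\ge n$. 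Applying Lemma \ref{exact}(i) to $0\to R/(I(\H):v)\stackrel{v}{\to} R/I(\H)\to R/(I(\H),v)\to 0$ then forces $\depth R/I(\H)\ge n=\dim R/I(\H)$, i.e.\ Cohen--Macaulayness.

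For the converse I would argue contrapositively. If $\H$ is not twin-free then, by Lemma \ref{twin}, every perfect matching---in particular $M$---contains a four-cycle $p-q-r-s-p$ two of whose edges, say $\{p,q\}$ and $\{r,s\}$, lie in $M$. These four vertices span an induced $C_4\cong K_{2,2}$, whose edge ideal is the standard non-Cohen--Macaulay example because its independence complex is disconnected. The plan is to promote this local obstruction to a genuine depth drop: produce a multidegree $\a$ for which the degree complex $\D_\a(I(\H))$ has the homotopy type of the independence complex of this $C_4$, so that $\widetilde H_0(\D_\a(I(\H)),k)\ne 0$ and Proposition \ref{complex}(i) yields $\depth R/I(\H)<n$. (Equivalently one may restrict the independence complex to a suitable link and invoke Reisner's criterion.)

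I expect the main obstacle to be the combinatorial bookkeeping rather than the homological input. Concretely, the two delicate points are: (a) producing the degree-one vertex $u$ purely from Favaron's two conditions; and (b) verifying that the deletions $\H\setminus\{u,v\}$ and $\H\setminus N[v]$ stay twin-free, since very-well-coveredness is inherited automatically but deleting vertices can \emph{create} twins, so one must carry the matching $M$ and the four-cycle criterion of Lemma \ref{twin} through the recursion. Controlling these points is exactly what allows the induction to close both implications, and is where Favaron's characterization does the real work.
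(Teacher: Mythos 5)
Your architecture for the ``twin-free $\Rightarrow$ Cohen--Macaulay'' direction coincides with the paper's (induction, a pendant vertex $u$ with unique neighbour $v$, the recursion $\depth R/I\in\{\depth R/(I:v),\depth R/(I,v)\}$, both terms equal to $n$ by induction), but the two points you yourself flag as delicate are exactly the content of the proof, and you leave both unproved. For (a), the existence of a degree-one vertex is the crux: the paper obtains it by choosing the matched pair $\{x,y\}$ maximizing $|N(y)|$ and observing that any $z\in N(x)\setminus\{y\}$ satisfies $N(y)\subseteq N(z)$ by Lemma \ref{covered}(ii) (applied to paths $z$--$x$--$y$--$w$), with \emph{strict} inclusion because $y,z$ are not twins --- contradicting maximality since $z$ is itself half of a matched pair. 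Without this argument the induction never starts. For (b), note also that $\H\setminus N[v]$ is \emph{not} an induced subgraph on a union of matched pairs: it contains the matching partners $y_{i_j}$ of the neighbours of $v$, and one must use Lemma \ref{covered}(ii) again to see that these become isolated, so that they contribute free variables and the genuine inductive input is the induced subgraph on the remaining matched pairs (which is where heredity of twin-freeness, via the $C_4$ criterion of Lemma \ref{twin}, is easy).

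The ``only if'' direction is where you genuinely diverge from the paper, and your proposed mechanism has a real gap. You want a multidegree $\a$ (equivalently a link) whose degree complex has the homotopy type of the independence complex of the induced $C_4$ on $\{p,q,r,s\}$; but the link of a face $F$ in the independence complex only contains a vertex $w$ of the $C_4$ when $F\cup\{w\}$ is independent, and for a natural choice such as $F=A\setminus\{p,r\}$ with $A$ a maximal independent set containing $p,r$, the vertices $q,s$ may have neighbours in $F$ and drop out of the link, leaving a contractible complex and no obstruction. Producing a face whose link sees all four vertices is not routine. The paper avoids this entirely: since $p\not\sim r$ and $q\not\sim s$ (by Lemma \ref{covered}(i), as a diagonal would create a triangle on a matching edge), every maximal independent set contains exactly one of the pairs $\{p,r\}$ or $\{q,s\}$, so the facets of the independence complex split into two classes that cannot be joined by codimension-one steps; the complex is not strongly connected, hence not Cohen--Macaulay by \cite[Proposition 11.7]{Bj}. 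You should either adopt that argument or supply an explicit construction of the face $F$ (or multidegree $\a$) you invoke.
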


\begin{proof}
Without restriction, we may assume that $\H$ is a very well-covered graph.

If $\H$ is not twin-free, then $\H$ has a cycle $C_4$ which contains two edges of a perfect matching of $\H$. It follows
from Lemma \ref{covered}(i) that the opposite vertices of this $C_4$ are not adjacent.
Therefore, each maximal independent set in $\H$ contains exactly one pair of opposite vertices of this $C_4$. This implies that the independence complex of $\H$ is not strongly connected and, thus, is not Cohen-Macaulay; see \cite[Proposition 11.7]{Bj}. Hence, $\H$ is not Cohen-Macaulay.
\par

Conversely, suppose that $\H$ is twin-free. Let $M = \{\{x_1,y_1\},...,\{x_n,y_n\}\}$ be a perfect matching in $\H$. Using Lemmas \ref{covered} and \ref{twin}, it is easy to check that for any proper subset $K \subseteq [1,n]$, the induced subgraph of $\H$ on the vertices $\{x_i,y_i\}_{i \in K}$ is also a twin-free very well-covered graph. By induction, we may assume that these induced subgraphs are Cohen-Macaulay.
\par

Consider a vertex $x$ in $\H$ and let $y$ be the vertex matched with $x$ in $M$. Let $N(x)$ denote the set of vertices in $\H$ adjacent to $x$. Observe that if there exists $z \in N(x)$ with $z \not= y$ then by Lemma \ref{covered}(ii) we have $N(y) \subseteq N(z)$. Furthermore, this inclusion is strict since $y$ and $z$ are not twin. Thus, by taking $x$ such that $|N(y)|$ is largest possible, we may choose $x$ so that $N(x) = \{y\}$. Without loss of generality, suppose $N(x_n) = \{y_n\}$.

Let $R = k[x_1, \dots, x_n, y_1, \dots, y_n]$ and $I = I(\H)$.
Let $\H'$ be the induced subgraph of $\H$ on $\{x_1, \dots, x_{n-1}, y_1, \dots, y_{n-1}\}$ and $R' = k[x_1, \dots, x_{n-1}, y_1, \dots, y_{n-1}]$. Then $(I,y_n) = (I(\H'),y_n)$.
Since $R'/I(\H')$ is a Cohen-Macaulay ring with $\dim R'/I(\H') = n-1$, we have
$$\depth R/(I,y_n) = \depth S/I(\H') + 1 = n.$$

Without loss of generality we may assume that $\{y_1, \dots, y_n\}$ is a maximal independent set of $\H$.
Suppose $N(y_n) = \{x_{i_1}, \dots, x_{i_s}\}$. Let $\G$ denote the induced subgraph of $\H$ on $Z :=  \{x_i, y_i|\ i \neq i_1,...,i_s\}$ and $S = k[Z]$.
By Lemma \ref{covered}(ii), the vertices $y_{i_1}, \dots, y_{i_s}$ are not adjacent to any vertex $z \in Z$.
This implies that $(I:y_n) = I(\G) + (x_{i_1}, \dots, x_{i_s})$.
Since $S/I(\G)$ is a Cohen-Macaulay ring with $\dim S/I(\G) = n-s$, we have
$$\depth R/(I:y_n) = \depth S/I(\G) + s = n.$$
By Theorem \ref{thm.inclusion}, we have $\depth R/I \in \{\depth R/(I:y_n), \depth R/(I,y_n)\}$.
Therefore, $\depth R/I = n = \dim R/I$.
\end{proof}

We shall now discuss an application of Proposition \ref{local} on the regularity of edge ideals.
For a hypergraph $\H$ we set $\reg \H = \reg I(\H)$ and call it the regularity of $\H$. \par

If $\reg \H \le 2$, the degree of the minimal monomial generators of $I(\H)$ is bounded by $2$. 
Hence, $I(\H)$ is actually the edge ideal of a graph. In particular, $\H$ is a graph if there are no containments among the edges. If $\H$ is a graph, we know that $\reg \H \le 2$ if and only if the complement of $\H$ is chordal \cite{Fr}. 
Recall that a graph is called \emph{chordal} if it has no induced cycle $C_m$ with $m \ge 4$. 
In the following we give a necessary condition for $\reg \H \le 3$.
\par

For a vertex $x$ of a hypergraph $\H$, we denote by $\H:x$ the hypergraph of all minimal sets of the form 
$F \cap (V\setminus \{x\})$, where $F$ is an edge of $\H$. 

\begin{theorem} \label{reg3}
Let $\H$ be a hypergraph such that $\H:x$ is a graph whose complement is chordal for all vertices $x$ of $\H$.
Then $\reg \H \le 3$.
\end{theorem}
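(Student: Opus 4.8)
The plan is to recognize this as an essentially immediate consequence of Proposition \ref{local} together with the Fr\"oberg characterization of graphs of regularity at most $2$ quoted in the discussion preceding the statement. The first step is to establish, for every vertex $x$, the identity
$$I(\H):x = I(\H:x).$$
This is the translation of the combinatorial operation $\H \mapsto \H:x$ into the algebra of colon ideals. To verify it, note that for an edge $F$ with associated squarefree monomial $m_F = \prod_{v\in F} v$, the colon $(m_F):x$ equals $\prod_{v \in F\setminus\{x\}} v$ when $x \in F$ and equals $m_F$ otherwise; taking minimal generators over all edges $F$ recovers exactly the squarefree monomials supported on the minimal sets $F\cap(V\setminus\{x\})$, which by definition generate $I(\H:x)$.

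Next I would invoke the hypothesis. By assumption $\H:x$ is a graph, so $I(\H:x)$ is generated in degree $2$, and its complement is chordal; Fr\"oberg's theorem \cite{Fr} then gives $\reg(\H:x) = \reg I(\H:x) \le 2$. Combining this with the identity of the previous step yields $\reg(I(\H):x) \le 2$ for every variable $x$ of $R$. Finally, I would apply Proposition \ref{local} with $r = 2$: since $\reg(I:x) \le 2$ holds for all variables $x$ when $I = I(\H)$, we conclude $\reg \H = \reg I(\H) \le 3$.

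The proof is therefore short, and its only real content lies in the two reductions above. The point to verify most carefully — and the place where the hypothesis is genuinely used — is that $\H:x$ being a \emph{graph} guarantees that $I(\H:x)$ is generated purely in degree $2$, which is precisely what brings Fr\"oberg's criterion into play. If some set $F\cap(V\setminus\{x\})$ were a singleton, then $I(\H:x)$ would acquire a linear generator and one would fall outside the scope of the quoted characterization; the assumption rules this out. The standing convention that hypergraphs are simple with no containments among edges ensures that the minimal sets $F\cap(V\setminus\{x\})$ correspond exactly to the minimal monomial generators, so that the colon identity and the subsequent application of Proposition \ref{local} are legitimate.
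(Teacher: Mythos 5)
Your proof is correct and follows exactly the same route as the paper: identify $I(\H):x$ with $I(\H:x)$, apply Fr\"oberg's criterion to get $\reg(I:x)\le 2$ for every vertex $x$, and conclude via Proposition \ref{local}. The extra verification of the colon identity and of the degree-$2$ generation is a welcome elaboration of steps the paper leaves implicit, but the argument is the same.
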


\begin{proof}
Let $I = I(\H)$. Then $I:x = I(\H:x)$ for all vertices $x$ of $\H$.
By the above result of Fr\"oberg, the assumption on $\H:x$ implies that $\reg (I:x) \le 2$.
Therefore, $\reg I \le 3$ by Proposition \ref{local}.
\end{proof}

The above necessary condition for $\reg \H \le 3$ is not so far from being a sufficient condition because due to Corollary \ref{bounds}, $\reg \H \le 3$ if and only if $\reg (\H-x) \le 3$ or $\reg (\H:x) \le 2$ for some vertex $x$. Here, $\H-x$ denotes the hypergraph of the edges of $\H$ not containing $x$. If $\reg (\H:x) \le 2$, then $\H:x$ is a graph (there are no containments among the edges of $\H:x$) whose complement is chordal by \cite{Fr}. \par

As a consequence, we obtain the following result of Nevo \cite[Theorem 5.1]{Ne}, which was also reproved by Dao, Huneke and Schweig \cite[Theorem 3.4]{DHS}. Recall that a {\em claw} of a graph is an induced complete bipartite graph $K_{1,3}$ and a {\em gap} is an induced graph whose complement is a cycle $C_4$.

\begin{corollary}
Let $\H$ be a claw-free and gap-free graph. Then $\reg \H \le 3$.
\end{corollary}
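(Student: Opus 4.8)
The plan is to deduce the corollary from Theorem \ref{reg3} (equivalently, from Proposition \ref{local}) by analysing the colon ideal $I : x$ combinatorially. Write $I = I(\H)$, fix a vertex $x$, let $N(x)$ be its neighborhood and set $N[x] = N(x) \cup \{x\}$. First I would record the structure of $I : x$: its minimal generators are the variables $\{y : y \in N(x)\}$ together with the monomials $uv$ coming from edges $\{u,v\}$ of $\H$ with $u,v \notin N[x]$. Hence $I : x = (N(x)) + I(\H')$, where $\H' := \H[V \setminus N[x]]$ is the induced subgraph on the vertices at distance $\ge 2$ from $x$. Since adjoining the variables $N(x)$ as generators leaves regularity unchanged, $\reg(I:x) = \reg I(\H')$, and by Fr\"oberg's theorem \cite{Fr} this is $\le 2$ if and only if the complement $\overline{\H'}$ is chordal. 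So the corollary reduces entirely to the following claim, which I regard as the heart of the argument: if $\H$ is gap-free and claw-free, then $\overline{\H'}$ is chordal for every vertex $x$.

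To prove the claim I would argue by contradiction. Suppose $\overline{\H'}$ has an induced cycle on a vertex set $\{v_1,\dots,v_m\}$ with $m \ge 4$. Since taking complements commutes with passing to induced subgraphs, this means $\H'[\{v_1,\dots,v_m\}] = \overline{C_m}$, i.e. $v_i \sim v_j$ in $\H$ exactly when $i,j$ are non-consecutive modulo $m$, and every $v_i \in V \setminus N[x]$. The case $m = 4$ is immediate: $\overline{C_4} = 2K_2$ is a gap, excluded because gap-freeness is hereditary. For $m \ge 5$, choose a neighbor $w \in N(x)$, which exists since $\H$ has no isolated vertices, and examine the adjacencies of $w$ to the $v_i$.

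Two constraints collide here. First, gap-freeness: for each edge $v_i v_j$ of $\overline{C_m}$ the disjoint edges $\{x,w\}$ and $\{v_i,v_j\}$ must not form a gap, and since $x \not\sim v_i, v_j$ this forces $w \sim v_i$ or $w \sim v_j$; hence the non-neighbors of $w$ among the $v_i$ contain no non-consecutive pair, so they number at most two, and $w$ is adjacent to at least $m-2$ of the $v_i$. Second, claw-freeness: $w$ cannot be adjacent to two consecutive $v_i, v_{i+1}$, for otherwise $\{w; x, v_i, v_{i+1}\}$ is an induced claw (one checks $w \sim x, v_i, v_{i+1}$, while $x \not\sim v_i$, $x \not\sim v_{i+1}$ as $v_i, v_{i+1} \notin N[x]$, and $v_i \not\sim v_{i+1}$ as consecutive). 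Thus the neighbors of $w$ among the $v_i$ form an independent set in the cyclic order $C_m$, of size at most $\lfloor m/2 \rfloor$. Combining the two bounds gives $m - 2 \le \lfloor m/2 \rfloor$, which fails for every $m \ge 5$, the desired contradiction. Therefore $\overline{\H'}$ has no induced cycle of length $\ge 4$ and is chordal.

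With the claim established, $\reg(I:x) \le 2$ for every vertex $x$, so Theorem \ref{reg3} (or directly Proposition \ref{local} applied with $r = 2$) yields $\reg \H = \reg I \le 3$. The hard part will be the combinatorial claim, and specifically the joint use of gap-freeness, which forces $w$ to meet almost all cycle vertices, against claw-freeness with $x$ serving as an auxiliary leaf, which forces the neighbors of $w$ to be sparse; these two pressures are compatible only when $m \le 4$, which is exactly what closes the argument.
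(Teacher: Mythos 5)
Your proof is correct and follows essentially the same route as the paper: reduce via Theorem \ref{reg3} and Fr\"oberg's theorem to showing the complement of $\H:x$ is chordal, then play gap-freeness (which forces a neighbor $w$ of $x$ to meet every edge of the induced $\overline{C_m}$) against claw-freeness (which forbids $w$ from being adjacent, together with $x$, to two consecutive cycle vertices). The only difference is cosmetic: where you close with the counting bound $m-2 \le \lfloor m/2 \rfloor$ after disposing of $m=4$ as a literal gap, the paper exhibits an explicit claw on $x$, $w$, a neighbor $z$ of $w$ in the induced subgraph, and one of the two non-neighbors of $z$ there, which handles all $m \ge 4$ uniformly.
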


\begin{proof}
By Theorem \ref{reg3}, we only need to show that the complement of $\H:x$ is a chordal graph for all $x \in V$.
Assume that there is a vertex $x$ such that the complement of $\H:x$ is not chordal.
Then the complement of $\H:x$ contains an induced cycle $C_m$, $m \ge 4$.
Let $G$ denote the induced subgraph of $\H$ whose complement is this $C_m$.
Let $y$ be an adjacent vertex of $x$. Then $y$ is not a vertex of $G$.
If $y$ were not adjacent to an edge of $G$, the induced graph on $x,y$ and the vertices of this edge were a gap.
Therefore, $y$ must be adjacent to every edge of $G$. Let $z$ be a vertex of $G$ adjacent to $y$. Let $u,v$ be
the two vertices of $G$ non-adjacent to $z$. Then $\{u,v\}$ is an edge of $G$. Hence, $y$ is adjacent to one of them, say $u$. Since $y$ is adjacent to $x,z,u$ and since $x,z,u$ are not adjacent to each other, the induced subgraph on $x,y,z,u$ is a claw, a contradiction.
\end{proof}

\noindent {\bf Acknowledgement}.
Giulio Caviglia is partially supported by the Simons Foundation (Grant \#209661).
Huy Tai Ha is partially supported by the Simons Foundation (Grant \#279786).
Ngo Viet Trung is partially supported by Vietnam National Foundation for Science and Technology Development (Grant \#101.04-2017.19) and the project VAST.HTQT.NHAT.1/16-18.
The main part of this work was done during research stays of the authors at the American Institute of Mathematics in the SQuaRE program ``Ordinary powers and symbolic powers" during the period 2012-2014. 
The authors are grateful to S. A. Seyed Fakhari for pointing out a mistake of Theorem \ref{thm.leaf} in the first version of this paper and to an anonymous referee for mentioning that Lemma \ref{colon}(i) and (ii) were already proved in \cite{Ra} and \cite{SF}, respectively.


\end{document}